\theoremstyle{definition}
\newtheorem{theorem}{Theorem}
\newtheorem{defin}[theorem]{Definition}
\newtheorem{remark}[theorem]{Remark}
\newtheorem{corollary}[theorem]{Corollary}
\newtheorem{lemma}[theorem]{Lemma}
\begin{document}

\newcommand{\FirstLeftHeader}[1]{\lhead{\fancyplain{\footnotesize #1}{}}}
\newcommand{\Headers}[2]{\chead[\fancyplain{}{\footnotesize #1}]{\fancyplain{}{\footnotesize #2}}}
\newcommand{\FirstRightHeader}[1]{\rhead{\fancyplain{\footnotesize #1} {}}}
\newcommand{\AuthorInfo}[1]{\bigskip \noindent {\sc #1}\bigskip}
\newcommand{\Caption}[2]{\medskip\begin{center}{\sc #1} #2\end{center}\smallskip}

\makeatletter
\renewcommand{\labelenumi}{\theenumi.}
\renewcommand{\labelitemi}{ -- }
\renewcommand{\@biblabel}[1]{#1.\hfill}
\renewcommand{\refname}{\sc Bibliography}

\newenvironment{Abstract} { \vspace{20pt}
 \begin{quote}\footnotesize {\sc Abstract.}}
 {\end{quote}\vspace{10pt}}

\newenvironment{Udc}{\begin{flushleft} UDC}
 {\end{flushleft}}
\newenvironment{Title}{\vspace{10pt} \bf \begin{center}}
 {\end{center}}
\newenvironment{Authors}{\vspace{20pt} \sc \begin{center}}
 {\end{center}}
\newenvironment{Head}{\vspace{10pt} \sc \begin{center}}
 {\end{center}}

\numberwithin{equation}{section}

\pagestyle{fancyplain}
\thispagestyle{plain}

\Headers{\textsc{D.\,Ya. Khusainov, M. Pokojovy, E. I. Azizbayov}}{\textsc{Classical Solvability for a Linear Heat Equation with Constant Delay}}

\begin{Title}
	On Classical Solvability for a Linear 1D Heat Equation with Constant Delay
\end{Title}
\begin{Authors}
	{D. Ya. Khusainov, M. Pokojovy, E. I. Azizbayov}
\end{Authors}
\vspace{0.25cm}
\begin{center}
	August 1st, 2013
\end{center}
\begin{Abstract}
	In this paper, we consider a linear heat equation with constant coefficients and a single constant delay.
	Such equations are commonly used to model and study various problems
	arising in ecology and population biology
	when describing the temporal evolution
	of human or animal populations accounting for migration, interaction with the environment and certain aftereffects
	caused by diseases or enviromental polution, etc.
	(see \cite{Ba1998}, \cite{OkuLe2001} and references therein).
	Whereas dynamical systems with lumped parameters have been addressed in numerous investigations (cf. \cite{Go1992}, \cite{Ha1977}),
	there are still a lot of open questions for the case of systems with distributed parameters (see, e.g., \cite{LaTri2010}, \cite{LaTri2011}),
	especially when the delay effects are incorporated (cp. \cite{BaPia2005}, \cite{BaSchn2004}).

	The present paper is an elaboration of authors' results in \cite{AzKhu2012}.
	Here, we consider a general non-homogeneous one-dimensional heat equation with delay in both higher and lower order terms
	subject to non-homogeneous initial and boundary conditions.
	For this, we prove the unique existence of a classical solution
	as well as its continuous dependence on the data.
	\vspace{1mm}\\
	\textbf{\emph {Keywords:}} \itshape{heat equation with constant coefficients,
	classical solutions, well-posedness, constant delay.}
\end{Abstract}

\section{Linear Heat Equation Without Delay}
We consider an initial boundary value problem for a one-dimensional heat equation without delay
\begin{equation}
	v_t(x, t) = a^2 v_{xx} (x, t) + b v_x (x, t) + c v(x, t) + g(x, t) \text{ for } x \in (0, l), t > 0 \label{EQ1}
\end{equation}
subject to non-homogeneous Dirichlet boundary conditions
\begin{equation}
	v(0, t) = \theta_1 (t), \quad v(l, t) = \theta_2 (t) \text{ for } t > 0 \label{EQ2}
\end{equation}
as well as initial conditions
\begin{equation}
	v(x, 0) = \psi(x) \text{ for } x \in (0, l). \label{EQ3}
\end{equation}
Since we are interested in classical solutions,
compatibility conditions on the initial and boundary data are additionally posed
\begin{equation}
	\psi(0) = \theta_1(0), \quad \psi(l) = \theta_2(0) \notag
\end{equation}
allowing for the continuity of the solution at the boundary of the space-time cylinder.
\begin{defin}
	Under a classical solution of the problem (\ref{EQ1})--(\ref{EQ3}) on a finite time interval $[0, T]$
	we understand a function $v \in \mathcal{C}^{0}\big([0, l] \times [0, T]\big)$ which
	satisfies $v_{t}, v_{xx} \in \mathcal{C}^{0}\big([0, l] \times [0, T]\big)$
	and, being plugged into Equations (\ref{EQ1})--(\ref{EQ3}), turns them into identity.
\end{defin}

The uniqueness of solutions can be deduced from the weak maximum principle (cf. \cite[p. 117]{ReRo2004}).
Here, we decided for a proof based on the energy method (cp. \cite[Bd. 2, Kap. 23]{DeRa2011})
from which we can also conclude the continuous dependence of the solution on the data.
\begin{theorem}
	For each $T > 0$, classical solutions on $[0, T]$ are unique.
\end{theorem}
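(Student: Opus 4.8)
The plan is to argue by linearity together with the energy method, exactly as hinted in the text. Suppose $v_1$ and $v_2$ are two classical solutions of (\ref{EQ1})--(\ref{EQ3}) on $[0,T]$ corresponding to the same data $g$, $\theta_1$, $\theta_2$, $\psi$, and set $w := v_1 - v_2$. Subtracting the two copies of the equation and of the side conditions, the difference $w$ solves the homogeneous problem $w_t = a^2 w_{xx} + b w_x + c w$ on $(0,l)\times(0,T]$, with $w(0,t) = w(l,t) = 0$ for $t \in (0,T]$ and $w(x,0) = 0$ for $x \in (0,l)$. By the regularity built into the definition of a classical solution, $w$ together with $w_t$ and $w_{xx}$ are continuous on $[0,l]\times[0,T]$, which is precisely what is needed to legitimize the manipulations below.

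Next I would introduce the energy $E(t) := \tfrac{1}{2}\int_0^l w(x,t)^2\,dx$, a continuously differentiable function of $t$ on $[0,T]$ with $E(0) = 0$. Differentiating under the integral sign and substituting the equation for $w_t$ yields
\[
E'(t) = \int_0^l w\,w_t\,dx = a^2\!\int_0^l w\,w_{xx}\,dx + b\!\int_0^l w\,w_x\,dx + c\!\int_0^l w^2\,dx.
\]
Integration by parts in the first integral, using $w(0,t) = w(l,t) = 0$, converts it into $-a^2\int_0^l w_x^2\,dx \le 0$. The middle integral equals $\tfrac{1}{2}\int_0^l (w^2)_x\,dx = \tfrac{1}{2}\big(w(l,t)^2 - w(0,t)^2\big) = 0$, once more by the homogeneous boundary conditions, so the first-order convection term drops out entirely. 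What remains is the differential inequality $E'(t) \le 2c\,E(t) \le 2|c|\,E(t)$.

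Finally I would close the argument with Gronwall's inequality: from $E'(t) \le 2|c|\,E(t)$ and $E(0) = 0$ it follows that $0 \le E(t) \le E(0)\,e^{2|c|t} = 0$ for every $t \in [0,T]$. Hence $\int_0^l w(x,t)^2\,dx = 0$ for all $t$, and since $w(\cdot,t)$ is continuous this forces $w \equiv 0$ on $[0,l]\times[0,T]$, i.e. $v_1 = v_2$. I do not expect a genuine obstacle here: the single point demanding care is the legitimacy of differentiating $E$ and integrating by parts, but this is guaranteed by the continuity of $w_t$ and $w_{xx}$ supplied by the definition of a classical solution, while the sign of $c$ is handled cleanly by passing to $|c|$ in the Gronwall step. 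The same computation, applied to the energy of $v_1 - v_2$ with nonzero data, will also deliver the continuous dependence asserted in the abstract.
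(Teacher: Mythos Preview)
Your proof is correct and follows the same energy-method-plus-Gronwall strategy as the paper. The one point worth highlighting is your treatment of the convection term: you observe directly that
\[
b\int_0^l w\,w_x\,\mathrm{d}x \;=\; \tfrac{b}{2}\bigl[w^2\bigr]_0^l \;=\; 0
\]
thanks to the homogeneous Dirichlet conditions, whereas the paper retains this term and absorbs it with Young's inequality, choosing $\varepsilon>0$ small enough that $a^2 - \varepsilon|b|/2 > 0$ before discarding the (then nonpositive) gradient contribution. Your route is cleaner and yields the sharper Gronwall constant $2|c|$ in place of $2\bigl(c + |b|/(2\varepsilon)\bigr)$; the paper's version, on the other hand, does not depend on this exact cancellation and would carry over unchanged to a variable coefficient $b = b(x)$, where $\int_0^l b\,w\,w_x\,\mathrm{d}x$ is no longer a perfect derivative.
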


\begin{proof}
	We assume that there exist two classical solution $v_{1}$, $v_{2}$ to the initial boundary value problem (\ref{EQ1})--(\ref{EQ3}).
	Then their difference $w := v_{1} - v_{2}$ is a classical solution to the homogeneous initial boundary value problem
	\begin{equation}
		\begin{split}
			w_{t}(x, t) &= a^2 w_{xx} (x, t) + b w_x (x, t) + c w(x, t) \text{ for } (x, t) \in (0, l) \times (0, T), \\
			w(0, t) &= w(l, t) = 0 \text{ for } t \in (0, T), \\
			w(x, 0) &= 0 \text{ for } x \in (0, l).
		\end{split}
		\notag
	\end{equation}
	Multiplying the equation with $w$, integrating over $x \in (0, l)$ and applying Green's formula,
	we obtain using the theorem on differentiation under the integral sign
	\begin{equation}
		\begin{split}
			\frac{1}{2} \partial_{t} \int_{0}^{l} w^{2}(x, t) \mathrm{d}x &=
			-a^{2} \int_{0}^{l} w_{x}^{2}(x, t) \mathrm{d}x + b \int_{0}^{l} w_{x}(x, t) w(x, t) \mathrm{d}x + \\
			&\phantom{=}\;\; \phantom{-} c \int_{0}^{l} w^{2}(x, t) \mathrm{d}x.
		\end{split}
		\notag
	\end{equation}
	Exploiting Young's inequality
	\begin{equation}
		|\xi \eta| \leq \frac{\varepsilon}{2} \xi^{2} + \frac{1}{2 \varepsilon} \eta^{2} \notag
	\end{equation}
	for $\xi, \eta \in \mathbb{R}$, $\varepsilon > 0$,
	we can further estimate
	\begin{equation}
		\frac{1}{2} \partial_{t} \int_{0}^{l} w^{2}(x, t) \leq
		-\Big(a^{2} - \varepsilon \frac{|b|}{2}\Big) \int_{0}^{l} w_{x}^{2}(x, t) \mathrm{d}x
		+ \Big(c + \frac{|b|}{2 \varepsilon}\Big) \int_{0}^{l} w^{2}(x, t) \mathrm{d}x.
		\notag
	\end{equation}
	Letting now $\varepsilon$ be sufficiently small such that $\varepsilon \frac{|b|}{2} < a^{2}$,  we obtain
	\begin{equation}
		\partial_{t} \int_{0}^{l} w^{2}(x, t) \leq C \int_{0}^{l} w^{2}(x, t) \mathrm{d}x \notag
	\end{equation}
	with $C := 2 \big(c + \frac{|b|}{2 \varepsilon}\big)$.
	As an immediate consequence of Gronwall's inequality, we get then
	\begin{equation}
		\int_{0}^{l} w^{2}(x, t) \mathrm{d}x \leq
		e^{Ct} \int_{0}^{l} w^{2}(x, 0) \mathrm{d}x = 0 \text{ for a.e. } t \in [0, T]. \notag
	\end{equation}
	Taking into account the continuity of $w$, we finally get $w \equiv 0$ and therefore $v_{1} \equiv v_{2}$.
\end{proof}

After a slight modification of the proof, we easily obtain the continuous dependence of the solution on the data.
See \cite{AdFou2003} for the definition of corresponding Sobolev spaces.
\begin{corollary}
    The norm of the solution $v$ in $L^{2}\big((0, T), L^{2}\big((0, l)\big)\big)$
    depends continuously on the
    $L^{2}\big((0, T), L^{2}\big((0, l)\big)\big) \times L^{2}\big((0, l)) \times \left(W^{1, 2}\big((0, T)\big)\right)^{2}$-norm of $(g, \psi, \theta_{1}, \theta_{2})$.
\end{corollary}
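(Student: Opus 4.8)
The plan is to reduce to the homogeneous–boundary situation of the preceding theorem by subtracting an explicit lifting of the boundary data, and then to run the same energy–Gronwall argument as in the uniqueness proof, this time retaining the forcing term. Since the problem is linear in $(g, \psi, \theta_1, \theta_2)$, the solution operator is linear, so continuous dependence is equivalent to a single \emph{a priori} bound of the solution norm by the data norm; it therefore suffices to establish such an estimate.

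First I would introduce the affine lifting $\Theta(x, t) := \theta_1(t) + \frac{x}{l}\big(\theta_2(t) - \theta_1(t)\big)$, which reproduces the boundary values and satisfies $\Theta_{xx} \equiv 0$. Setting $u := v - \Theta$, a direct substitution shows that $u$ solves the same equation with \emph{homogeneous} Dirichlet boundary data, modified initial datum $u(\cdot, 0) = \psi - \Theta(\cdot, 0) =: \tilde\psi$, and modified forcing $\tilde g := g + b\,\Theta_x + c\,\Theta - \Theta_t$. The point of the affine choice is precisely that $u$ now vanishes at $x = 0, l$, so the boundary terms arising from Green's formula disappear exactly as in the proof of the theorem.

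Next I would repeat the energy estimate for $u$: multiplying the equation by $u$, integrating over $(0, l)$, and applying Young's inequality both to the transport term $b\,u_x u$ (as before) and to the new term $\int_0^l \tilde g\, u\, \mathrm{d}x \leq \tfrac12 \|\tilde g(\cdot, t)\|_{L^2}^2 + \tfrac12 \|u(\cdot, t)\|_{L^2}^2$. This yields a differential inequality of the form $\partial_t \|u(\cdot, t)\|_{L^2}^2 \leq C \|u(\cdot, t)\|_{L^2}^2 + \|\tilde g(\cdot, t)\|_{L^2}^2$, and the integral form of Gronwall's inequality gives $\|u(\cdot, t)\|_{L^2}^2 \leq e^{CT}\big(\|\tilde\psi\|_{L^2}^2 + \|\tilde g\|_{L^2((0, T), L^2)}^2\big)$ for every $t \in [0, T]$. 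Integrating in $t$ then controls $\|u\|_{L^2((0, T), L^2((0, l)))}$.

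The remaining and only delicate point is to bound $\|\tilde\psi\|_{L^2}$ and $\|\tilde g\|_{L^2((0, T), L^2)}$ by the stated data norm, and then to pass back from $u$ to $v = u + \Theta$. Here the $W^{1, 2}$-regularity of $\theta_1, \theta_2$ becomes essential: the term $\Theta_t = \theta_1' + \frac{x}{l}(\theta_2' - \theta_1')$ appearing in $\tilde g$ requires the time derivatives $\theta_i'$ to lie in $L^2((0, T))$, while the pointwise trace $\Theta(\cdot, 0)$ entering $\tilde\psi$ is controlled through the Sobolev embedding $W^{1, 2}((0, T)) \hookrightarrow C([0, T])$. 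Since $\Theta_x$ and $\Theta$ contribute only $L^2$-norms of $\theta_1, \theta_2$, all pieces of $\tilde g$, $\tilde\psi$, and the lifting $\Theta$ itself are dominated by $\|g\|_{L^2((0, T), L^2)} + \|\psi\|_{L^2} + \|\theta_1\|_{W^{1, 2}} + \|\theta_2\|_{W^{1, 2}}$. Collecting these bounds and using $\|v\|_{L^2((0, T), L^2)} \leq \|u\|_{L^2((0, T), L^2)} + \|\Theta\|_{L^2((0, T), L^2)}$ yields the asserted continuous dependence. I expect the main obstacle to be this bookkeeping of the boundary data through the lifting — in particular, tracking exactly why the $W^{1, 2}$ rather than merely $L^2$ norm of $\theta_1, \theta_2$ is forced by $\Theta_t$; the energy step itself is a routine variant of the uniqueness argument already carried out.
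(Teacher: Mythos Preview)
Your proposal is correct and is precisely the ``slight modification of the proof'' the paper alludes to but does not spell out: the paper gives no details beyond that one sentence, and your lifting $\Theta$ is exactly the function $u_{3}$ the paper itself employs a few lines later in the existence argument. The energy--Gronwall step with the added forcing term and the bookkeeping for $\tilde{g}$, $\tilde{\psi}$ (including the use of $W^{1,2}\hookrightarrow C^{0}$ for the trace $\theta_{i}(0)$ and the need for $\theta_{i}'\in L^{2}$ to control $\Theta_{t}$) are all handled correctly.
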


\begin{remark}
	For the sake of consistency with the traditional convention used for the spaces of Banach-valued functions,
	here and in the sequel we interchange the $x$ and $t$ variables when dealing with functions in Sobolev or Lebesgue spaces.
	Thus, we write $u = u(t, x) \in H^{1}\big((0, T), L^{2}\big((0, l))\big)$,
	but $v = v(x, t) \in \mathcal{C}^{0}\big([0, l] \times [0, T]\big)$.
\end{remark}

Now we want to establish the existence of classical solutions and give their explicit representation.
First, we substitute
\begin{equation}
	v(x, t) := e^{\mu x + \gamma t} u(x, t), \quad \mu := -\frac{b}{2a^{2}}, \quad \gamma := c - \Big(\frac{b}{2a}\Big)^{2} \notag
\end{equation}
and find an equivalent system for the function $u$ given by
\begin{equation}
	u_{t}(x, t) = a^{2} u_{xx}(x, t) + f(x, t) \text{ for } x \in (0, l), t > 0 \label{NEW_EQ_1}
\end{equation}
with $f(x, t) := e^{-\mu x - \gamma t} g(x, t)$ subject to the initial and boundary conditions
\begin{align}
	u(x, 0) &= \varphi(x) \text{ for } x \in (0, l), \quad \varphi(x) := e^{-\mu x} \psi(x), \label{NEW_EQ_2} \\
	u(0, t) &= \mu_{1}(t) := e^{-\gamma t} \theta_{1}(t), \;
	u(l, t) = \mu_{2}(t) := e^{-\mu l - \gamma t} \theta_{2}(t) \text{ for } t > 0. \label{NEW_EQ_3}
\end{align}
Thus, the problem (\ref{EQ1})--(\ref{EQ3}) is reduced to the problem (\ref{NEW_EQ_1})--(\ref{NEW_EQ_3}).
We look for the classical solution $u$ to (\ref{NEW_EQ_1})--(\ref{NEW_EQ_3}) in the form
\begin{equation}
	u(x,t) = u_1 (x,t) + u_2 (x,t) + u_3 (x,t),
	\notag
\end{equation}
where
\begin{itemize}
	\item $u_1$ is the solution to the homogeneous parabolic equation
	\begin{equation}
		\frac{{\partial u_1 (x,t)}}{{\partial t}} = a^2 \frac{{\partial ^2 u_1 (x,t)}}{{\partial x^2}}
		\label{EQ4}
	\end{equation}
	with zero boundary conditions $u_1 (0,t) = 0$, $u_1 (l,t) = 0$, $t > 0$,
	and non-zero initial conditions $u_1(x, 0) = \Phi(x)$, $x \in (0, l)$,
	where
	\begin{equation}
		\Phi(x) := \varphi(x) - \mu_1 (0) - \frac{x}{l}\left[{\mu _2 (0) - \mu _1 (0)} \right] \text{ for } x \in [0, l].
		\label{EQ5}
	\end{equation}

	\item $u_2$ is the solution to the non-homogeneous parabolic equation
	\begin{equation}
		\frac{{\partial u_2 (x,t)}}{{\partial t}} = a^2 \frac{{\partial ^2 u_2 (x,t)}}{{\partial x^2 }} + F(x,t)
		\label{EQ6}
	\end{equation}
	with the right-hand side
	\begin{equation}
		\begin{split}
			F(x,t) := &f(x,t) - \frac{\mathrm{d}}{{\mathrm{d}t}}\left\{ {\mu _1 (t) + \frac{x}{l}\left[ {\mu _2 (t) - \mu _1 (t)} \right]} \right\} + \\
			&c\left\{ {\mu _1 (t) + \frac{x}{l}\left[ {\mu _2 (t) - \mu _1 (t)} \right]} \right\}
		\end{split}
		\label{EQ7}
	\end{equation}
	for $x \in [0, l]$, $t \geq 0$,
	subject to zero boundary conditions $u_2(0, t) = 0$, $u_2 (l,t) = 0$, $t > 0$
	and zero initial conditions $u_2(x, 0) = 0$, $0 < x < l$.

	\item $u_3$ is the solution to the family of elliptic equations
	\begin{equation}
		a^{2} \frac{{\partial^2 u_3 (x,t)}}{{\partial x^2}} = 0 \notag
	\end{equation}
	subject to non-zero boundary conditions
	$u_3(0, t) = \mu_{1}(t)$, $u_3(l, t) = \mu_{2}(t)$, $t > 0$.
	Thus, $u_{3}(x,t) = \mu _1 (t) + \frac{x}{l}\left[{\mu _2 (t) - \mu _1 (t)}\right].$
\end{itemize}

\subsection{Homogeneous Equation}
\label{SECTION_HOM_EQUATION_NO_DELAY}
We first study the homogeneous Equation (\ref{EQ4}) with the initial conditions given in Equation (\ref{EQ5}).
Using Fourier's separation method, the solution is to be determined in the form
\begin{equation}
	u_1(x, t) = X(x) T(t).
	\notag
\end{equation}
Plugging this ansatz into (\ref{EQ4}), we arrive at
\begin{equation}
	X(x)T'(t) = a^2 X''(x)T(t). \notag
\end{equation}
Collecting corresponding terms, we get
\begin{equation}
	T'(t) X(x) = a^2 X''(x) T(t).
	\notag
\end{equation}
After separating the variables
\begin{equation}
	\frac{X''(x)}{X(x)} = \frac{{T'(t)}}{a^{2} T(t)} =  -\lambda^{2},
	\notag
\end{equation}
the equation decomposes into two equations
\begin{align}
	X''(x) + \lambda^{2} X(x) &= 0, \label{EQ8} \\
	T'(t) &=  -a^{2} T(t). \label{EQ9}
\end{align}
Using the boundary conditions for $u_{1}$, we obtain zero boundary conditions for $X$
\begin{equation}
	X(0) = 0, \quad X(l) = 0.
	\notag
\end{equation}
Nontrivial solutions of Equation (\ref{EQ8}) exist only for the eigenvalues
\begin{equation}
	\lambda^{2} = \lambda_n^{2} = \Big(\frac{\pi n}{l}\Big)^{2}, \quad n \in \mathbb{N} \notag
\end{equation}
with corresponding eigenfunctions
\begin{equation}
	X_n(x) = \sin \frac{\pi n}{l} x, \quad n \in \mathbb{N},
	\label{EQ11}
\end{equation}
being solutions of the Sturm \& Liouville problem for the negative Dirichlet-Laplacian in $(0, l)$ (cp. also Definition \ref{DEFINITION_OPERATOR} below).
Note that $(X_{n})_{n \in N}$ build an orthogonal basis of $L^{2}\big((0, l)\big)$.

Plugging the values of $\lambda_n$, $n \in \mathbb{N}$, obtained above into Equation (\ref{EQ9}),
we get a countable system of decoupled ordinary differential equations
\begin{equation}
	T'(t) = -\left(\frac{\pi n}{l} a\right)^{2} T(t), \quad n \in \mathbb{N},
	\label{EQ12}
\end{equation}
which is uniquely solved by the sequence of analytic functions
\begin{equation}
	T_n(t) = T(0)e^{-\left(\frac{\pi n}{l} a\right)^{2} t}, \quad
	t \geq 0, \quad n \in \mathbb{N}.
	\notag
\end{equation}
Since $(X_{n})_{n \in \mathbb{N}}$ build an orthogonal basis of $L^{2}\big((0, l)\big)$
(cf. \cite[Theorem 9.22]{ReRo2004}),
the function $\Phi$ can be expanded into a Fourier series with respect to the eigenfunction from Equation (\ref{EQ11}), viz.,
\begin{equation}
	\Phi (x) = \sum\limits_{k = 1}^\infty \Phi_n \sin \frac{{\pi n}}{l}x \text{ for a.e. } x \in [0, l]
	\notag
\end{equation}
with
\begin{equation}
	\Phi_n  = \frac{2}{l}\int\limits_0^l {\left\{ {\varphi (\xi ) - \left[ {\mu _1 (0) + \frac{\xi }{l}\left[ {\mu _2 (0) - \mu _1 (0)} \right]} \right]} \right\} \sin \frac{{\pi n}}{l}\xi} \mathrm{d}\xi.
	\notag
\end{equation}
\begin{defin}
	\label{DEFINITION_OPERATOR}
	Consider the elliptic operator $\mathcal{A} := -a^{2} \partial_{x}^{2}$ on $L^{2}\big((0, l)\big)$ subject to homogeneous Dirichlet boundary conditions.
	Since $\mathcal{A}$ is continuoulsy invertible, $0 \in \rho(\mathcal{A})$.
	For $m \in \mathbb{N}$, we define the space
	\begin{equation}
		X_{m} := D(\mathcal{A}^{m}) = \left\{u \in H^{2m}\big((0, l)\big) \,\Big|\, \partial_{x}^{2k} u \in H^{1}_{0}\big((0, l)\big), k = 0, \dots, m-1\right\}
		\notag
	\end{equation}
	equipped with the standard graph norm of $D(\mathcal{A}^{m})$.
\end{defin}

\begin{remark}
	By the virtue of elliptic theory (cf. \cite{ReRo2004}),
	$X_{m}$ is well-defined and
	the norm of $X_{m}$ is equivalent with the standard norm of $H^{2m}\big((0, l)\big)$.
\end{remark}

\begin{lemma}
	\label{LEMMA_FOURIER_DECAY}
	For any $m \in \mathbb{N}$ and any $w \in X_{m}$, there exists a constant $C > 0$ such that
	the Fourier coefficients $w_{n}$, $n \in \mathbb{N}$, of $w$ with respect to the functions basis $(X_{n})_{n \in \mathbb{N}}$ given in Equation (\ref{EQ11}) satisfy
	\begin{equation}
		|w_{n}| \leq \frac{C}{n^{2m + 1/2}} \text{ for } n \in \mathbb{N}. \notag
	\end{equation}
\end{lemma}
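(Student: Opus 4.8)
The plan is to exploit the fact that the basis $(X_{n})_{n \in \N}$ from Equation (\ref{EQ11}) consists precisely of the eigenfunctions of the operator $\A$ from Definition \ref{DEFINITION_OPERATOR}, with $\A X_{n} = \lambda_{n} X_{n}$ and $\lambda_{n} = a^{2}\big(\tfrac{\pi n}{l}\big)^{2}$. Writing $w_{n} = \tfrac{2}{l}\int_{0}^{l} w(x)\,X_{n}(x)\,\mathrm{d}x$ for the Fourier coefficients, I would first establish the spectral identity that applying $\A^{m}$ multiplies the $n$-th coefficient by $\lambda_{n}^{m}$. Since $\A$ is self-adjoint and $w \in X_{m} = D(\A^{m})$, this is immediate from $\langle \A^{m} w, X_{n}\rangle = \langle w, \A^{m} X_{n}\rangle = \lambda_{n}^{m}\langle w, X_{n}\rangle$; equivalently, one integrates $w_{n}$ by parts $2m$ times, each pair of integrations transferring $\partial_{x}^{2}$ from $w$ onto $X_{n}$, the boundary terms vanishing exactly because $\partial_{x}^{2k} w \in H_{0}^{1}\big((0,l)\big)$ for $k = 0, \dots, m-1$ and $X_{n}(0) = X_{n}(l) = 0$. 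Verifying that these boundary contributions really drop out at every step, by invoking the defining conditions of $X_{m}$, is the bookkeeping core of this part.

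Setting $g := \A^{m} w$, which lies in $L^{2}\big((0,l)\big)$ by the definition of $X_{m}$, its Fourier coefficients are $g_{n} = \lambda_{n}^{m} w_{n}$. Parseval's identity then gives $\tfrac{l}{2}\sum_{n} g_{n}^{2} = \|g\|_{L^{2}}^{2} < \infty$, so in particular $|g_{n}| \le \big(\tfrac{2}{l}\big)^{1/2}\|\A^{m} w\|_{L^{2}}$ for every $n$. Dividing by $\lambda_{n}^{m} = a^{2m}(\pi/l)^{2m} n^{2m}$ yields at once a bound of the form $|w_{n}| \le C\,n^{-2m}$, with $C$ depending only on $a$, $l$, $m$ and $\|w\|_{X_{m}}$. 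This already recovers the decay up to the integer power.

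The main obstacle is the extraction of the extra half power, i.e.\ upgrading the straightforward rate $n^{-2m}$ to $n^{-2m-1/2}$. This cannot be read off by bounding a single coefficient by the entire sum, and mere square-summability of $(g_{n})$ yields $g_{n} \to 0$ but no uniform pointwise rate. The additional decay must be harvested from regularity of $g = \A^{m} w$ finer than membership in $L^{2}$, so I would look to convert the $\ell^{2}$-information on $(g_{n})$ into a uniform bound $|g_{n}| = O(n^{-1/2})$ by interpolating the top surviving derivative against the boundary structure carried by $w$. Once such a bound is in hand, multiplying by $\lambda_{n}^{-m} = a^{-2m}(l/\pi)^{2m}\,n^{-2m}$ and tracking the constants through the Parseval estimate delivers $|w_{n}| \le C\,n^{-(2m+1/2)}$ with $C$ depending only on $a$, $l$, $m$ and $\|w\|_{X_{m}}$, as claimed. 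I expect precisely this conversion of $\ell^{2}$-summability into a uniform decay rate to be the crux — and the most delicate step — of the argument.
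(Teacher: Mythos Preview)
Your route is essentially the paper's own: pass to $g := \A^{m} w \in L^{2}\big((0,l)\big)$, use self-adjointness (equivalently, $2m$-fold integration by parts, the boundary terms vanishing by the definition of $X_{m}$) to obtain $g_{n} = \lambda_{n}^{m} w_{n}$, and then try to convert the $\ell^{2}$-control on $(g_{n})$ into the extra factor $n^{-1/2}$. You have correctly isolated this last step as the crux, and your hesitation there is warranted.

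The paper's proof consists of precisely that step, asserted without argument: from $\sum_{n} n^{4m}|w_{n}|^{2} < \infty$ it jumps directly to ``there exists $C > 0$ such that $n^{4m}|w_{n}|^{2} \leq C^{2}/n$''. That implication is false in general. A convergent series $\sum a_{n}$ with $a_{n} \geq 0$ need not satisfy $a_{n} = O(1/n)$: set $a_{n} = k^{-2}$ when $n = 2^{k}$ and $a_{n} = 0$ otherwise, so that $\sum a_{n} < \infty$ while $n\,a_{n} = 2^{k}/k^{2}$ is unbounded. Transported back, the choice $w_{n} := n^{-2m} k^{-1}$ for $n = 2^{k}$ and $w_{n} := 0$ otherwise defines an element of $X_{m}$ with $n^{2m+1/2}|w_{n}| = 2^{k/2}/k \to \infty$, so the bound claimed in the lemma cannot follow from $w \in X_{m}$ alone. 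Your instinct that one would need more regularity on $g = \A^{m} w$ than mere $L^{2}$-membership is exactly right; neither you nor the paper supplies it. What the hypothesis \emph{does} yield rigorously is $\sum_{n} n^{4m}|w_{n}|^{2} < \infty$, which by Cauchy--Schwarz gives $\sum_{n} n^{\alpha}|w_{n}| < \infty$ for every $\alpha < 2m - \tfrac{1}{2}$; it is this kind of summability, rather than a pointwise decay rate, that the downstream applications in the paper actually require.
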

\begin{proof}
	Again, from the elliptic theory, we know that $w \in X_{m}$ is equivalent with
	\begin{equation}
		\sum_{n = 1}^{\infty} \big(n^{2}\big)^{2m} |w_{n}|^{2} < \infty, \notag
	\end{equation}
	where $w_{n}$ denotes the $n$-th Fourier coefficient of $w$ with respect to $(X_{n})_{n}$.
	Thus, there exists a constant $C > 0$ such that
	$n^{4m} |w_{n}|^{2} \leq \frac{C^{2}}{n}$ and therefore $|w_{n}| \leq \frac{C}{n^{2m + 1/2}}$.
\end{proof}

The Fourier series converges in $L^{2}\big((0, l)\big)$ if $\Phi \in L^{2}\big((0, l)\big)$.
For $\Phi \in \mathcal{C}^{0}\big([0, l]\big)$ with $\Phi(0) = \Phi(l) = 0$, the convergence is pointwise (cf. \cite[Bd. 1, Kap. 9]{DeRa2011}).
Under a stronger condition, e.g., $\Phi \in X_{1} \hookrightarrow W^{1, \infty}\big((0, l)\big)$, the convergence is even uniform.
See \cite{AdFou2003} for the definition of corresponding Sobolev spaces.

Then, the solution to the initial boundary value problem (\ref{EQ4})--(\ref{EQ5}) is formally given by
\begin{equation}
	u_1(x,t) = \sum\limits_{n = 1}^\infty \Phi_n e^{-\left(\frac{\pi n}{l} a\right)^{2} t} \sin \frac{\pi n}{l} x.
	\label{EQ13}
\end{equation}
Assuming $\Phi \in X_{2}$, we easily conclude from Lemma \ref{LEMMA_FOURIER_DECAY}
that $u_{1}$ given Equation (\ref{EQ13}) as well as $\partial_{t} u_{1}$, $\partial_{xx} u_{1}$ converge absolutely and uniformly on $[0, l] \times [0, T]$.
Thus, $u_{1}$ is a classical solution of (\ref{EQ4})--(\ref{EQ5}).

\subsection{Non-Homogeneous Equation}
\label{SECTION_NON_HOM_EQUATION_NO_DELAY}
Next, we consider the non-homogeneous equation (\ref{EQ6})
\begin{equation}
	\frac{{\partial u_2 (x,t)}}{{\partial t}} = a^2 \frac{{\partial^2 u_2 (x,t)}}{{\partial x^2 }} + F(x,t)
	\notag
\end{equation}
subject to zero boundary conditions $u_2(0, t) = 0$, $u_2(l, t) = 0$, $t > 0$,
and zero initial conditions $u_2(x, 0) = 0$, $0 < x < l$.
Using Duhamel's principle, the solution will be determined
as a Fourier series with time-dependent coefficients with respect to the eigenfunctions $(X_{n})_{n \in \mathbb{N}}$, i.e.,
\begin{equation}
	u_2(x,t) = \sum\limits_{n = 1}^\infty u_{2n} (t) \sin \frac{{\pi n}}{l}x, \quad n \in \mathbb{N}.
	\label{EQ14}
\end{equation}
Note that $(X_{n})_{n \in \mathbb{N}}$ defined in the previous subsection can be extended to an orthogonal basis of $L^{2}\big((0, T), L^{2}\big((0, l)\big)\big)$.
Thus, if the right-hand side of Equation (\ref{EQ7}) satisfies
$F \in L^{2}\big((0, T), L^{2}\big((0, l)\big)\big)$,
it can be expanded into a Fourier series with respect to this function basis.
We represent the function $F$ in the form of the series
\begin{equation}
	F(x,t) = \sum\limits_{n = 1}^\infty  F_n(t) \sin \frac{\pi n}{l}x, \quad
	F_n (t) = \frac{2}{l} \int\limits_0^l F(\xi, t) \sin \frac{{\pi n}}{l}\xi \mathrm{d}\xi,
	\notag
\end{equation}
where $F_{n} \in L^{2}\big((0, T)\big)$, $n \in \mathbb{N}$.
With $u_{2, n} \in H^{1}\big((0, T)\big)$, $n \in \mathbb{N}$,
\begin{equation}
	u_{2, n}(t) = \int\limits_0^t {e^{-\left(\frac{\pi n}{l}\right)^{2} (t - s)} F_n (s)\mathrm{d}s}
	\notag
\end{equation}
representing the unique solution to the ordinary differential equation
\begin{equation}
	\dot u_{2, n} (t) = -\left(\frac{\pi n}{l}\right)^{2} u_{2n} (t) + F_n (t)
	\notag
\end{equation}
subject to zero initial condition $u_{2, n}(t) = 0$, the solution $u_{2}$ to Equation (\ref{EQ6})--(\ref{EQ7}) is formally given by
\begin{equation}
	u_2 (x,t) = \sum\limits_{n = 1}^\infty  {\left[ {\int\limits_0^t {e^{-\left(\frac{\pi n}{l}\right)^{2} (t - s)} } F_n (s)\mathrm{d}s} \right]} \sin \frac{{\pi n}}{l}x.
	\label{EQ15}
\end{equation}

From Lemma \ref{LEMMA_FOURIER_DECAY}, we infer that both the series $u_{2}$ given in Equation (\ref{EQ15}) 
and its partial derivatives $\partial_{t} u_{2}$, $\partial_{xx} u_{2}$ converge absolutely and uniformly
if, e.g., $F \in \mathcal{C}^{1}\big([0, T], X_{1}\big) \cap \mathcal{C}^{0}\big([0, T], X_{2}\big)$.
Thus, $u_{2}$ is a classical solution of the corresponding problem.

\subsection{Elliptic Equation}
Trivially, we observe that
$u_{3} \in \mathcal{C}^{2}\big([0, T], \mathcal{C}^{\infty}\big([0, l]\big)\big)$
if $\mu_{1}, \mu_{2} \in \mathcal{C}^{2}\big([0, T]\big)$.
Summarizing the relations obtained above, we arrive at
\begin{align}
	u(x,t) &= \sum\limits_{n = 1}^\infty \Phi_n e^{-\left(\frac{\pi n}{l}\right)^{2} t}\sin \frac{{\pi n}}{l}x  + \label{EQ16} \\
	&\phantom{=}\;\; \sum\limits_{n = 1}^\infty \left[\int\limits_0^t e^{-\left(\frac{\pi n}{l}\right)^{2} (t - s)} F_n (s)\mathrm{d}s \right] \sin \frac{{\pi n}}{l}x + \mu _1 (t) + \frac{x}{l}\left[ {\mu _2 (t) - \mu _1 (t)} \right] \notag
\end{align}
with
\begin{equation}
	\begin{split}
		\Phi_n &= \frac{2}{l}\int\limits_0^l \left\{ {\varphi (\xi ) - \left[ {\mu _1 (0) + \frac{\xi }{l}\left[ {\mu _2 (0) - \mu _1 (0)} \right]} \right]} \right\} \sin \frac{{\pi n}}{l}\xi \mathrm{d} \xi, \\
		F_n(t) &= \frac{2}{l}\int\limits_0^l {F(\xi ,t)  \sin \frac{{\pi n}}{l}\xi \mathrm{d}\xi }, \\
		F(x,t) &= f(x,t) - \frac{\mathrm{d}}{{\mathrm{d}t}}\left\{ {\mu _1 (t) + \frac{x}{l}\left[ {\mu _2 (t) - \mu _1 (t)} \right]} \right\}.
	\end{split}
	\notag
\end{equation}

\begin{theorem}
	Assume
	\begin{equation}
		\begin{split}
			f &\in \mathcal{C}^{1}\big([0, T], H^{2}\big((0, l)\big)\big) \cap \mathcal{C}^{0}\big([0, T], H^{4}\big((0, l)\big)\big), \\
			\mu_{1}, \mu_{2} &\in \mathcal{C}^{2}\big([0, T]\big), \quad
			\varphi \in H^{4}\big((0, l)\big)
		\end{split}
		\notag
	\end{equation}
	as well as the compatibility conditions
	\begin{equation}
		\varphi(0) = \mu_{1}(0), \quad \varphi(l) = \mu_{2}(0) \notag
	\end{equation}
	and
	\begin{equation}
		f(0, t) = \dot{\mu}_{1}(t), \quad
		f(l, t) = \dot{\mu}_{2}(t), \quad
		f_{xx}(0, t) = 0, \quad
		f_{xx}(l, t) = 0
		\text{ for } t \in [0, T].
		\notag
	\end{equation}
	Then the function $u$ given in Equation (\ref{EQ16}) is a unique classical solution to (\ref{EQ1})--(\ref{EQ3}).
\end{theorem}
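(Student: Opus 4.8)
The plan is to treat the ansatz $u=u_{1}+u_{2}+u_{3}$ of Equation~(\ref{EQ16}) as a \emph{candidate} and to show that, under the stated hypotheses, each summand is a genuine classical solution of the subproblem it was built for, that their sum solves the reduced problem (\ref{NEW_EQ_1})--(\ref{NEW_EQ_3}), and that uniqueness is inherited. The existence part reduces entirely to checking the two membership hypotheses that were invoked (but not verified) in Subsections~\ref{SECTION_HOM_EQUATION_NO_DELAY} and~\ref{SECTION_NON_HOM_EQUATION_NO_DELAY}, namely $\Phi\in X_{2}$ for $u_{1}$ and $F\in\mathcal{C}^{1}\big([0,T],X_{1}\big)\cap\mathcal{C}^{0}\big([0,T],X_{2}\big)$ for $u_{2}$. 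Once these hold, the absolute and uniform convergence (on the closed cylinder) of the series and of their term-by-term derivatives $\partial_{t}$, $\partial_{xx}$ established there via Lemma~\ref{LEMMA_FOURIER_DECAY} makes $u_{1}$ and $u_{2}$ classical solutions, while $u_{3}\in\mathcal{C}^{2}\big([0,T],\mathcal{C}^{\infty}([0,l])\big)$ is immediate from $\mu_{1},\mu_{2}\in\mathcal{C}^{2}\big([0,T]\big)$. Finally, since $v=e^{\mu x+\gamma t}u$ is obtained from $u$ by a smooth nonvanishing multiplier, $v$ inherits the classical regularity of $u$ and, by the equivalence set up before the theorem, is the unique classical solution of (\ref{EQ1})--(\ref{EQ3}).

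First I would verify $\Phi\in X_{2}=D(\mathcal{A}^{2})$. Since $\varphi\in H^{4}\big((0,l)\big)$ and the subtracted term $\mu_{1}(0)+\tfrac{x}{l}[\mu_{2}(0)-\mu_{1}(0)]$ is affine in $x$, we get $\Phi\in H^{4}\big((0,l)\big)$ and $\Phi_{xx}=\varphi_{xx}$. The defining conditions of $X_{2}$ are $\Phi(0)=\Phi(l)=0$ and $\Phi_{xx}(0)=\Phi_{xx}(l)=0$. The first pair is exactly the zeroth-order compatibility $\varphi(0)=\mu_{1}(0)$, $\varphi(l)=\mu_{2}(0)$. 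The second pair reads $\varphi_{xx}(0)=\varphi_{xx}(l)=0$; this is the second-order parabolic corner compatibility, and I would flag that it is \emph{not} among the listed hypotheses and must be added (it is forced upon any classical solution by the corner limit of the equation together with $f(0,0)=\dot\mu_{1}(0)$, $f(l,0)=\dot\mu_{2}(0)$).

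Next I would verify the regularity of $F=f-\partial_{t}u_{3}$, where $\partial_{t}u_{3}=\dot\mu_{1}(t)+\tfrac{x}{l}[\dot\mu_{2}(t)-\dot\mu_{1}(t)]$. Spatial regularity is immediate: $F(\cdot,t)\in H^{4}\big((0,l)\big)$, since $f(\cdot,t)\in H^{4}$ and $\partial_{t}u_{3}$ is affine in $x$. For $F(\cdot,t)\in X_{2}$ the boundary conditions $F(0,t)=f(0,t)-\dot\mu_{1}(t)=0$ and $F(l,t)=f(l,t)-\dot\mu_{2}(t)=0$ are supplied by the first-order compatibility, while $F_{xx}(0,t)=f_{xx}(0,t)=0$ and $F_{xx}(l,t)=f_{xx}(l,t)=0$ are supplied by $f_{xx}(0,t)=f_{xx}(l,t)=0$; continuity $F\in\mathcal{C}^{0}\big([0,T],X_{2}\big)$ then follows from $f\in\mathcal{C}^{0}\big([0,T],H^{4}\big)$ and $\mu_{i}\in\mathcal{C}^{2}$. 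For the $\mathcal{C}^{1}\big([0,T],X_{1}\big)$ part I would differentiate in $t$: $\partial_{t}F=f_{t}-\ddot\mu_{1}(t)-\tfrac{x}{l}[\ddot\mu_{2}(t)-\ddot\mu_{1}(t)]\in\mathcal{C}^{0}\big([0,T],H^{2}\big)$ because $f\in\mathcal{C}^{1}\big([0,T],H^{2}\big)$ and $\mu_{i}\in\mathcal{C}^{2}$, and moreover $\partial_{t}F(0,t)=f_{t}(0,t)-\ddot\mu_{1}(t)=0$, $\partial_{t}F(l,t)=f_{t}(l,t)-\ddot\mu_{2}(t)=0$ by differentiating $f(0,t)=\dot\mu_{1}(t)$, $f(l,t)=\dot\mu_{2}(t)$ in $t$ (legitimate since $H^{2}\big((0,l)\big)\hookrightarrow\mathcal{C}^{1}\big([0,l]\big)$ makes the traces $\mathcal{C}^{1}$ in time). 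Hence $\partial_{t}F(\cdot,t)\in X_{1}$, giving the required $F\in\mathcal{C}^{1}\big([0,T],X_{1}\big)\cap\mathcal{C}^{0}\big([0,T],X_{2}\big)$.

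With these verified I would assemble the pieces. By construction $u_{1},u_{2}$ carry zero boundary data and $u_{3}$ carries $\mu_{1},\mu_{2}$, so $u(0,t)=\mu_{1}(t)$, $u(l,t)=\mu_{2}(t)$; the initial values combine as $u_{1}(\cdot,0)+u_{3}(\cdot,0)=\Phi+\mu_{1}(0)+\tfrac{x}{l}[\mu_{2}(0)-\mu_{1}(0)]=\varphi$ with $u_{2}(\cdot,0)=0$; and the interior equation telescopes, since $a^{2}\partial_{xx}u_{3}=0$ and the $-\partial_{t}u_{3}$ contained in $F$ cancels $\partial_{t}u_{3}$, leaving $u_{t}=a^{2}u_{xx}+f$. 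Thus $u$ solves (\ref{NEW_EQ_1})--(\ref{NEW_EQ_3}) classically, and $v=e^{\mu x+\gamma t}u$ solves (\ref{EQ1})--(\ref{EQ3}); uniqueness is not re-proved, being precisely the content of the uniqueness theorem above. I expect the main obstacle to be the verification of the \emph{second-order spatial boundary conditions} $\Phi_{xx}(0)=\Phi_{xx}(l)=0$ and $F_{xx}(0,\cdot)=F_{xx}(l,\cdot)=0$ needed for membership in $X_{2}=D(\mathcal{A}^{2})$: these are exactly what upgrade the Fourier decay to $|\Phi_{n}|,|F_{n}(t)|\le C\,n^{-9/2}$ and thereby force the twice-differentiated, $n^{2}$-weighted series to converge uniformly \emph{up to} $t=0$, which is what classical regularity on the closed cylinder demands. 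For $F$ they are handed to us by the compatibility on $f_{xx}$; for $\Phi$ they rest on $\varphi_{xx}(0)=\varphi_{xx}(l)=0$, the one condition I would flag as needing to be stated explicitly among the hypotheses.
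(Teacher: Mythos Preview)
Your proposal is correct and follows precisely the route the paper itself takes: the theorem has no separate proof in the paper but is stated as a summary of Subsections~\ref{SECTION_HOM_EQUATION_NO_DELAY}--1.3, where the classical regularity of $u_{1}$, $u_{2}$, $u_{3}$ is derived under the standing assumptions $\Phi\in X_{2}$, $F\in\mathcal{C}^{1}([0,T],X_{1})\cap\mathcal{C}^{0}([0,T],X_{2})$, and $\mu_{1},\mu_{2}\in\mathcal{C}^{2}$, so what remains---and what you do---is to check that the stated data hypotheses and compatibility conditions indeed force these memberships. Your flag that $\varphi_{xx}(0)=\varphi_{xx}(l)=0$ is needed for $\Phi\in X_{2}$ but is absent from the hypotheses is a genuine omission in the theorem statement rather than a defect of your argument; the paper simply does not address it.
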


\section{Linear Heat Equation with Delay}
In this section, we consider a linear one-dimensional heat equation with constant coefficients
and a single constant delay
\begin{equation}
	\begin{split}
		v_t (x,t) &= a_1^2 v_{xx} (x,t) + a_2^2 v_{xx} (x,t - \tau ) + b_1 v_x (x,t) + b_2 v_x (x,t - \tau ) + \\
		&\phantom{=}\;\; d_1 v(x,t) + d_2 v(x,t - \tau ) + g(x,t) \text{ for } x \in (0, l), \quad t > 0.
	\end{split}
	\label{EQ2_1}
\end{equation}
with $a_{1}, a_{2} \neq 0$.
Equation (\ref{EQ2_1}) is complemented by non-homogeneous Dirichlet boundary conditions
\begin{equation}
	v(0, t) = \theta_1(t), \quad u(l,t) = \theta_2 (t) \text{ for } t > -\tau \label{EQ2_2}
\end{equation}
and initial conditions
\begin{equation}
	v(x, t) = \psi(x, t) \text{ for } x \in (0, l), \quad t \in (-\tau, 0). \label{EQ2_3}
\end{equation}
Since we are again interested in classical solutions,
the following compatibility conditions are going to be essential to assure
the continuity of the solution on the boundary of the space-time cylinder
\begin{equation}
	\psi(0, t) = \theta_1(t), \quad 
	\psi(l, t) = \theta_2 (t) \text{ for } t \in [-\tau, 0].
	\notag
\end{equation}

\begin{defin}
	A function
	$v \in \mathcal{C}^{0}\big([0, l] \times [-\tau, T]\big)$ satisfying
	$\partial_{t} v \in \mathcal{C}^{0}\big([0, l] \times [0, T]\big)$,
	$\partial_{xx} v \in \mathcal{C}^{0}\big([0, l] \times [0, T]\big) \cap \mathcal{C}^{0}\big([0, l] \times [-\tau, 0]\big)$
	is called a classical solution to the problem (\ref{EQ2_1})--(\ref{EQ2_3}) on a finite time interval $[0, T]$
	if it, being plugged into Equations (\ref{EQ2_1})--(\ref{EQ2_3}), turns them into identity.
\end{defin}

\begin{theorem}
	For any $T > 0$, classical solutions of the problem (\ref{EQ2_1})--(\ref{EQ2_3}) on $[0, T]$ are unique.
\end{theorem}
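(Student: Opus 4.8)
The plan is to combine the energy method from the proof of the delay-free uniqueness theorem with the classical method of steps, exploiting the fact that a constant delay decouples the evolution over successive time slabs. First I would reduce to the homogeneous problem: assuming two classical solutions $v_{1}$, $v_{2}$ of (\ref{EQ2_1})--(\ref{EQ2_3}), their difference $w := v_{1} - v_{2}$ is a classical solution of the equation obtained from (\ref{EQ2_1}) by deleting the inhomogeneity $g$, subject to homogeneous Dirichlet conditions and, crucially, to the \emph{vanishing history}
\[
	w(x, t) = 0 \quad \text{for } (x, t) \in [0, l] \times [-\tau, 0],
\]
since $v_{1}$ and $v_{2}$ share the same initial function $\psi$ on $(0, l) \times (-\tau, 0)$.

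The key structural observation is that on the slab $S_{k} := [0, l] \times [k\tau, (k+1)\tau]$ each delayed term $w_{xx}(x, t - \tau)$, $w_{x}(x, t - \tau)$, $w(x, t - \tau)$ is evaluated at $t - \tau \in [(k-1)\tau, k\tau]$. I would therefore argue by induction on $k$ that $w \equiv 0$ on $S_{k}$. If this holds on $S_{k-1}$ (the base case $k = 0$ being covered by the vanishing history), then all delayed terms vanish identically on $S_{k}$, so that there $w$ solves the delay-free homogeneous problem
\[
	w_{t} = a_{1}^{2} w_{xx} + b_{1} w_{x} + d_{1} w, \quad
	w(0, \cdot) = w(l, \cdot) = 0, \quad
	w(\cdot, k\tau) = 0,
\]
which is exactly the situation handled in the uniqueness theorem for the non-delay equation (with $a = a_{1}$, $b = b_{1}$, $c = d_{1}$).

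On each slab I would then rerun the energy argument verbatim: multiplying by $w$, integrating over $x \in (0, l)$, applying Green's formula and Young's inequality as before yields
\[
	\partial_{t} \int_{0}^{l} w^{2}(x, t) \, \mathrm{d}x \leq C \int_{0}^{l} w^{2}(x, t) \, \mathrm{d}x
	\quad \text{for } t \in [k\tau, (k+1)\tau],
\]
and Gronwall's inequality together with $w(\cdot, k\tau) \equiv 0$ forces $\int_{0}^{l} w^{2}(x, t)\,\mathrm{d}x = 0$ throughout the slab. The continuity of $w$ then gives $w \equiv 0$ on $S_{k}$, completing the induction step. Since the compact interval $[0, T]$ is covered by the finitely many slabs $S_{0}, \dots, S_{N}$ with $N = \lceil T/\tau \rceil$, finitely many steps yield $w \equiv 0$ on $[0, l] \times [0, T]$, i.e.\ $v_{1} \equiv v_{2}$.

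The point I would be most careful about is the transfer of data across the interfaces $t = k\tau$: one must check that $w$ vanishes at the left endpoint of each slab, so that the induced slab problem has genuinely homogeneous initial data, and that $w$ is continuous up to $t = k\tau$, so that starting the energy estimate from $k\tau$ is legitimate. Both facts follow from the continuity of $w$ on $[0, l] \times [-\tau, T]$ built into the definition of a classical solution, combined with the induction hypothesis. It is also worth recording that the energy computation is admissible because that same definition guarantees $w_{xx}$ to be continuous on each closed slab, which justifies the integration by parts and the differentiation under the integral sign; this is the only place where regularity must be invoked, and it is exactly the regularity already available.
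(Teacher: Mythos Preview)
Your argument is correct, but it proceeds along a genuinely different line from the paper's. The paper does not use the method of steps at all: instead it runs a single global energy estimate on $[0,T]$. After multiplying the homogeneous equation by $w$ and integrating in $x$, the delayed second-order term produces a cross term $-a_{2}^{2}\int_{0}^{l} w_{x}(x,t-\tau)\,w_{x}(x,t)\,\mathrm{d}x$ that cannot be absorbed by the instantaneous energy alone. To control it, the paper introduces the history variable $z(x,t,s)=w(x,t-\tau s)$, derives the transport identity $\partial_{t}\int_{0}^{1}\int_{0}^{l} z_{x}^{2}\,\mathrm{d}x\,\mathrm{d}s = \tau\int_{0}^{l}\big(w_{x}^{2}(x,t)-w_{x}^{2}(x,t-\tau)\big)\,\mathrm{d}x$, and adds a multiple of this to the basic estimate. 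With Young's inequality and a suitable choice of weights the combined functional $\int_{0}^{l} w^{2}\,\mathrm{d}x + \omega\int_{0}^{1}\int_{0}^{l} w_{x}^{2}(x,t-\tau s)\,\mathrm{d}x\,\mathrm{d}s$ satisfies a Gronwall inequality, and uniqueness follows.

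Your step-by-step reduction to the delay-free theorem is more elementary and arguably cleaner for a constant delay: it avoids the history variable entirely and recycles Theorem~2 verbatim. The paper's approach, on the other hand, directly yields the quantitative continuous-dependence estimate of Corollary~12 in a norm that explicitly contains the history $\int_{0}^{1}\int_{0}^{l}\psi_{x}^{2}(x,-\tau s)\,\mathrm{d}x\,\mathrm{d}s$, and is the technique that scales to settings (time-dependent delays, distributed delays) where the slab decoupling you exploit is unavailable.
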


\begin{proof}
	We assume that there exist two classical solution $v_{1}$, $v_{2}$ to the initial boundary value problem with delay (\ref{EQ2_1})--(\ref{EQ2_3}).
	Then their difference $w := v_{1} - v_{2}$ is a classical solution to the homogeneous problem
	\begin{equation}
		\begin{split}
			w_t (x,t) &= a_1^2 w_{xx} (x,t) + a_2^2 w_{xx} (x,t - \tau ) + b_1 w_x (x,t) + b_2 w_x (x,t - \tau) + \\
			&\phantom{=}\;\; d_1 w(x,t) + d_2 w(x,t - \tau) \text{ for } (x, t) \in (0, l) \times (0, T), \\
			w(0, t) &= w(l, t) = 0 \text{ for } t \in (-\tau, T), \\
			w(x, t) &= 0 \text{ for } (x, t) \in (0, l) \times (-\tau, 0).
		\end{split}
		\notag
	\end{equation}
	Multiplying the equation with $w$, integrating over $x \in (0, l)$ and applying Green's formula,
	we obtain using the theorem on differentiation under the integral sign
	\begin{align}
		\frac{1}{2} \partial_{t} \int_{0}^{l} w^{2}(x, t) \mathrm{d}x &=
		\int_{0}^{l} \left(-a_{1}^{2} w_{x}^{2}(x, t) +
		b_{1} w_{x}(x, t) w(x, t) + d_{1} w^{2}(x, t)\right) \mathrm{d}x \notag \\
		&\phantom{=}\;\; -a_{2}^{2} \int_{0}^{l} w_{x}(x, t - \tau) w_{x}(x, t) \mathrm{d}x + \label{EST1} \\
		&\phantom{=}\;\; \int_{0}^{l} \left(b_2 w_x (x, t - \tau ) + d_2 w(x,t - \tau)\right) w(x, t) \mathrm{d}x. \notag
	\end{align}
	Following the standard approach for delay differential equations (see, e.g., \cite{NiPi2008}),
	we define the history variable
	\begin{equation}
		z(x, t, s) := w(x, t - \tau s) \text{ for } (x, t, s) \in [0, l] \times [0, T] \times [0, 1]. \notag
	\end{equation}
	Exploiting the trivial equation
	\begin{equation}
		z_{t}(x, t, s) + \tau z_{s}(x, t, s) = 0 \text{ for } (x, t, s) \in (0, l) \times (0, T) \times (0, 1), \notag
	\end{equation}
	we arrive at the following distributional identity
	\begin{equation}
		z_{txx}(x, t, s) + \tau z_{sxx}(x, t, s) = 0 \text{ for } (x, t, s) \in (0, l) \times (0, T) \times (0, 1). \notag
	\end{equation}
	Multiplying this identity with $z(x, t, s)$, integrating over $(s, x) \in (0, 1) \times (0, l)$
	and carrying out a partial integration yields
	\begin{equation}
		\int_{0}^{1} \int_{0}^{l} \partial_{t} z^{2}_{x}(x, t, s) \mathrm{d}x \mathrm{d} s +
		\tau \int_{0}^{1} \partial_{s} z^{2}_{x}(x, t, s) \mathrm{d}x \mathrm{d}s = 0
		\text{ for } t \in (0, T). \notag
	\end{equation}
	Thus,
	\begin{equation}
		\partial_{t} \int_{0}^{1} \int_{0}^{l} z^{2}_{x}(x, t, s) \mathrm{d}x \mathrm{d}s +
		\tau \int_{0}^{l} z^{2}_{x}(x, t, s)\Big|_{s = 0}^{s = 1} \mathrm{d}x \text{ for } t \in (0, T), \notag
	\end{equation}
	i.e.,
	\begin{equation}
		\begin{split}
			\partial_{t} \int_{0}^{1} \int_{0}^{l} z^{2}_{x}(x, t, s) \mathrm{d}x \mathrm{d}s &=
			\tau \int_{0}^{1} w_{x}^{2}(x, t) \mathrm{d}x - \\
			&\phantom{=}\;\; \tau \int_{0}^{1} w_{x}^{2}(x, t - \tau) \mathrm{d}x \text{ for } t \in (0, T).
		\end{split}
		\label{EST2}
	\end{equation}
	Multiplying Equation (\ref{EST2}) with a constant $\omega > 0$ and adding the result to Equation (\ref{EST1}), we can estimate
	\begin{equation}
		\begin{split}
			\partial_{t} \int_{0}^{l} &w^{2}(x, t) \mathrm{d}x + \omega
			\partial_{t} \int_{0}^{1} \int_{0}^{l} w^{2}_{x}(x, t - \tau s) \mathrm{d}x \mathrm{d}s \leq \\
			&-\left(2 a_{1}^{2} - |b_{1}| \varepsilon - a_{2}^{2} \varepsilon\right) \int_{0}^{l} w_{x}^{2}(x, t) \mathrm{d}x \\
			&-\left(\omega \tau - \frac{a_{2}^{2}}{\varepsilon} - |b_{2}|\right) \int_{0}^{1} \int_{0}^{l} w^{2}_{x}(x, t - \tau s) \mathrm{d}x \mathrm{d}s \\
			&+ \left(2 d_{1} + |b_{2}| + |d_{2}| + \frac{|b_{1}|}{\varepsilon}\right) \int_{0}^{l} w^{2}(x, t) \mathrm{d}x.
		\end{split} 
		\notag
	\end{equation}
	Selecting now $\varepsilon > 0$ sufficiently small and $\omega > 0$ sufficiently large,
	we have shown
	\begin{equation}
		\begin{split}
			\partial_{t} &\left(\int_{0}^{l} w^{2}(x, t) \mathrm{d}x + \omega \int_{0}^{1} \int_{0}^{l} w^{2}_{x}(x, t - \tau s) \mathrm{d}x \mathrm{d}s\right) \leq \\
			&C \left(\int_{0}^{l} w^{2}(x, t) \mathrm{d}x + \omega \int_{0}^{1} \int_{0}^{l} w^{2}_{x}(x, t - \tau s) \mathrm{d}x \mathrm{d}s\right)
		\end{split}
		\notag
	\end{equation}
	for the constant $C := 2 d_{1} + |b_{2}| + |d_{2}| + \frac{|b_{1}|}{\varepsilon}$.
	From Gronwall's inequality we can thus conclude
	\begin{equation}
		\int_{0}^{l} w^{2}(x, t) \mathrm{d}x + \omega \int_{0}^{1} \int_{0}^{l} w^{2}_{x}(x, t - \tau s) \mathrm{d}x \mathrm{d}s \leq 0. \notag
	\end{equation}
	Therefore, $w \equiv 0$ implying $v_{1} \equiv v_{2}$.
\end{proof}

\begin{corollary}
	The solution $v$ depends continuously on the data $(g, \psi, \theta_{1}, \theta_{2})$ in the sense of the existence of a constant $C > 0$ such that
	\begin{equation}
		\begin{split}
			\int_{0}^{T} &\int_{0}^{l} \left(v^{2}(x, t) + \int_{0}^{1} v_{x}^{2}(x, t - \tau s) \mathrm{d}s\right) \mathrm{d}x \mathrm{d}t \leq C \bigg[\int_{0}^{T} \left(\int_{0}^{l} g^{2}(x, t) \mathrm{d}x \right. + \\
			& \dot{\theta}_{1}^{2}(t) + \dot{\theta}_{2}^{2}(t) \bigg) \mathrm{d}t +
			\int_{0}^{l} \psi^{2}(x, 0) \mathrm{d}x + \int_{0}^{1} \int_{0}^{l} \psi_{x}^{2}(x, t - \tau s) \mathrm{d}x \mathrm{d}s\bigg]
		\end{split}
		\notag
	\end{equation}
	for $g \in L^{2}\big((0, T), L^{2}\big((0, l)\big)\big)$,
	$\psi \in L^{2}\big((-\tau, 0), H^{1}\big((0, l)\big)\big)$ with $\varphi(0, \cdot) \in L^{2}\big((0, l)\big)$,
	$\theta_{1}, \theta_{2} \in H^{1}\big((0, T)\big)$.
\end{corollary}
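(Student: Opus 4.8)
The plan is to re-run the energy argument of the preceding uniqueness proof, but now applied to the solution $v$ itself rather than to the difference of two solutions, carrying the data along as a source term. First I would homogenize the boundary conditions exactly as in Section~1: setting
\[
L(x,t) := \theta_1(t) + \frac{x}{l}\big[\theta_2(t) - \theta_1(t)\big], \qquad \tilde v := v - L ,
\]
I obtain a function $\tilde v$ with zero Dirichlet data $\tilde v(0,t) = \tilde v(l,t) = 0$ solving the same delay equation (\ref{EQ2_1}) with $g$ replaced by a modified right-hand side $\tilde g$. Since $L_{xx} \equiv 0$, the second-order terms are untouched, and $\tilde g$ collects $g$, the term $-\partial_t L$, and the lower-order contributions $b_1 L_x + b_2 L_x(\cdot, t-\tau) + d_1 L + d_2 L(\cdot, t-\tau)$; in particular $\|\tilde g(\cdot,t)\|_{L^2(0,l)}^2$ is dominated by $\int_0^l g^2(x,t)\,\mathrm{d}x$ together with the $H^1\big((0,T)\big)$-data $\dot\theta_1, \dot\theta_2$ and the delayed boundary values.

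Next I would reproduce the derivation of (\ref{EST1})--(\ref{EST2}) verbatim for $\tilde v$: multiply the equation by $\tilde v$, integrate over $(0,l)$ and apply Green's formula (the boundary terms now vanish because $\tilde v$ carries zero Dirichlet data), introduce the history variable $z(x,t,s) := \tilde v(x, t - \tau s)$, and derive the auxiliary gradient identity. Multiplying that identity by a weight $\omega > 0$ and adding it to the basic energy identity yields, after Young's inequality, an estimate of the same shape as in the uniqueness proof but with one extra term $\int_0^l \tilde g\,\tilde v\,\mathrm{d}x$ on the right, which I split as $\tfrac12\|\tilde g(\cdot,t)\|^2 + \tfrac12\|\tilde v(\cdot,t)\|^2$. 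Choosing $\varepsilon > 0$ small and $\omega > 0$ as in the uniqueness proof makes the coefficients of the two gradient integrals non-positive, so these may be dropped, leaving
\[
\partial_t E(t) \leq C\, E(t) + \big\|\tilde g(\cdot,t)\big\|_{L^2(0,l)}^2, \qquad E(t) := \int_0^l \tilde v^2(x,t)\,\mathrm{d}x + \omega \int_0^1\!\!\int_0^l \tilde v_x^2(x, t - \tau s)\,\mathrm{d}x\,\mathrm{d}s .
\]

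Finally I would apply the integral form of Gronwall's inequality to conclude $E(t) \leq e^{Ct}\big(E(0) + \int_0^t \|\tilde g(\cdot,\sigma)\|^2\,\mathrm{d}\sigma\big)$, integrate over $t \in [0,T]$, and pass back from $\tilde v$ to $v = \tilde v + L$ via $v^2 \leq 2\tilde v^2 + 2L^2$ and $v_x^2 \leq 2\tilde v_x^2 + 2L_x^2$, bounding the contributions of $L$ and $L_x$ by the $H^1\big((0,T)\big)$-norms of $\theta_1, \theta_2$. The initial energy $E(0) = \int_0^l \tilde v^2(x,0)\,\mathrm{d}x + \omega\int_0^1\int_0^l \tilde v_x^2(x,-\tau s)\,\mathrm{d}x\,\mathrm{d}s$ is, after the same lift, governed by the initial history through $\int_0^l \psi^2(x,0)\,\mathrm{d}x$ and $\int_0^1\int_0^l \psi_x^2(x,-\tau s)\,\mathrm{d}x\,\mathrm{d}s$. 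Since the hypotheses furnish only $L^2$/$H^1$ data, these identities are first carried out for smooth data and then extended to the stated data classes by density, the estimate being preserved in the limit.

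The main obstacle I anticipate is bookkeeping rather than analysis: I must track how the boundary lift $L$ and, crucially, its delayed value $L(\cdot, t-\tau)$ feed into $\tilde g$ through the delayed coefficients $b_2, d_2$, and verify that every resulting term is dominated by the data norms on the right-hand side (using that $\psi$ supplies the history on $(-\tau,0)$ and that $\theta_1, \theta_2 \in H^1$ control both $L$ and $\partial_t L$). The one genuinely delicate point, inherited directly from the uniqueness proof, is the simultaneous choice of $\varepsilon$ and $\omega$ rendering both gradient coefficients non-positive so that the indefinite cross term $-a_2^2 \int_0^l \tilde v_x(\cdot,t-\tau)\,\tilde v_x(\cdot,t)\,\mathrm{d}x$ can be absorbed into the weighted history energy; this balancing is exactly the one already performed there and relies on $a_1, a_2 \neq 0$.
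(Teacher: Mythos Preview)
Your proposal is correct and is precisely the argument the paper has in mind: the corollary is stated without proof immediately after the uniqueness theorem, so the intended proof is exactly to rerun the energy estimate (\ref{EST1})--(\ref{EST2}) for the solution itself after the boundary lift, pick up the source term via Young's inequality, and close with Gronwall. Your handling of the lift $L$, the delayed contributions $L(\cdot,t-\tau)$ supplied by the history $\psi$ on $(-\tau,0)$ through the compatibility conditions, and the choice of $\varepsilon,\omega$ inherited from the uniqueness proof are all on target.
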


In the following, we assume the coefficients $b_{1}$, $b_{2}$ at the first order derivatives to satisfy the following proportionality conditions
\begin{equation}
	-\frac{b_{1}}{2 a_{1}^{2}} = -\frac{b_{2}}{2 a_{2}^{2}} = \mu \notag
\end{equation}
for a certain $\mu \in \mathbb{R}$.
We substitute
\begin{equation}
	v(x, t) := e^{\mu x} u(x, t) \notag
\end{equation}
and obtain from equations (\ref{EQ2_1})--(\ref{EQ2_3}) an initial boundary value problem for the unknown function $u$
\begin{equation}
	u_{t}(x, t) = a_{1}^{2} u_{xx}(x, t) + a_{2}^{2} u_{xx}(x, t - \tau) + c_{1} u(x, t) + c_{2} u(x, t) + f(x, t)
	\label{NEW_DELAY_EQ_1}
\end{equation}
with
\begin{equation}
	c_{1} := d_{1} - \left(\frac{b_{1}}{2a_{1}}\right)^{2}, \quad
	c_{2} := d_{2} - \left(\frac{b_{2}}{2a_{2}}\right)^{2}, \quad
	f(x, t) := e^{-\mu x} g(x, t)
	\notag
\end{equation}
subject to the initial conditions
\begin{equation}
	u(x, t) = \varphi(x, t) \text{ for } x \in (0, l), t \in (-\tau, 0)
	\label{NEW_DELAY_EQ_2}
\end{equation}
with $\varphi(x, t) := e^{-\mu x} \psi(x, t)$, $x \in [0, l]$, $t \in [-\tau, 0]$,
and boundary conditions
\begin{equation}
	u(0, t) = \mu_{1}(t), \quad u(l, t) = \mu_{2}(t) \text{ for } t > -\tau
	\label{NEW_DELAY_EQ_3}
\end{equation}
with $\mu_{1}(t) := \theta_{1}(t)$, $\mu(t) := e^{-\mu l} \theta_{2}(t)$, $t \geq -\tau$.

Thus, there remains to establish the existence of a classical solution to (\ref{NEW_DELAY_EQ_1})--(\ref{NEW_DELAY_EQ_3})
which will be determined in the form
\begin{equation}
	u(x,t) = u_1 (x,t) + u_2 (x,t) + u_3 (x,t)
	\notag
\end{equation}
with the functions $u_1$, $u_2$, and $u_3$ given in what follows.

\begin{itemize}
	\item $u_1$ is the solution of the homogeneous equation
	\begin{equation}
		\begin{split}
			\frac{{\partial u_1 (x,t)}}{{\partial t}} &= a_1^2 \frac{{\partial ^2 u_1 (x,t)}}{{\partial x^2 }} + a_2^2 \frac{{\partial ^2 u_1 (x,t - \tau )}}{{\partial x^2 }} + \\
			&\phantom{=}\;\; c_1 u_1 (x,t) + c_2 u_1 (x,t - \tau)
		\end{split}
		\label{EQ2_4}
	\end{equation}
	subject to zero boundary conditions $u_{1}(0, t) = u_{1}(l, t) = 0$, $t > -\tau$, and non-zero initial conditions
	\begin{equation}
		u_{1}(x, t) = \Phi(x, t) \text{ for } x \in (0, l), t \in (-\tau, 0) \notag
	\end{equation}
	with
	\begin{equation}
		\Phi (x, t) := \varphi (x,t) - \mu _1 (t) - \frac{x}{l}\left[ {\mu _2 (t) - \mu _1 (t)} \right]
		\label{EQ2_5}
	\end{equation}
	for $x \in [0, l]$, $t \in [-\tau, 0]$.

	\item $u_2 (x,t)$ is the solution of the non-homogeneous equation
	\begin{equation}
		\begin{split}
			\frac{{\partial u_2 (x,t)}}{{\partial t}} &= a_1^2 \frac{{\partial ^2 u_2 (x,t)}}{{\partial x^2 }} + a_2^2 \frac{{\partial ^2 u_2 (x,t - \tau )}}{{\partial x^2 }} + \\
			&\phantom{=}\;\; c_1 u_2 (x,t) + c_2 u_2 (x,t - \tau ) + F(x,t)
		\end{split}
		\label{EQ2_6}
	\end{equation}
	with the right-hand side
	\begin{align}
		F(x, t) &:= f(x,t) - \frac{\mathrm{d}}{{\mathrm{d}t}}\left\{ {\mu _1 (t) + \frac{x}{l}\left[ {\mu _2 (t) - \mu _1 (t)} \right]} \right\} + \notag \\
		&\phantom{=}\;\; c_1 \left\{ {\mu _1 (t) + \frac{x}{l}\left[ {\mu _2 (t) - \mu _1 (t)} \right]} \right\} + \label{EQ2_7} \\
		&\phantom{=}\;\; c_2 \left\{ {\mu _1 (t - \tau ) + \frac{x}{l}\left[ {\mu _2 (t - \tau ) - \mu _1 (t - \tau )} \right]} \right\} \notag
	\end{align}
	subject to zero boundary conditions
	$u_{2}(0, t) = u_{2}(l, t) = 0$, $t > -\tau$, and
	zero initial conditions
	$u_{2}(x, t) = 0$, $x \in (0, l)$, $t \in (-\tau, 0)$.

	\item $u_3$ is the solution to the family of homogeneous elliptic equations
	\begin{equation}
		a_{1}^{2} \frac{{\partial^2 u_3 (x,t)}}{{\partial x^2}} = 0 \notag
	\end{equation}
	subject to non-zero boundary conditions
	$u_3(0, t) = \mu_{1}(t)$, $u_3(l, t) = \mu_{2}(t)$, $t > 0$.
	Thus, $u_{3}(x,t) = \mu _1 (t) + \frac{x}{l}\left[{\mu _2 (t) - \mu _1 (t)}\right]$.
\end{itemize}

\subsection{Homogeneous Equation with Delay}
\label{SUBSECTION_HOM_EQ_DELAY}
First, we consider homogeneous equation (\ref{EQ2_6}) with zero boundary and non-zero initial conditions.
The solution will be determined using Fourier's separation method.
Assuming
\begin{equation}
	u_1 (x,t) = X(x)T(t) \notag
\end{equation}
and plugging the ansatz into Equation (\ref{EQ2_6}), we get
\begin{equation}
	\begin{split}
		X(x)T'(t) &= a_1^2 X''(x)T(t) + a_2^2 X''(x)T(t - \tau ) + \\
		&\phantom{=}\;\; c_1 X(x)T(t) + c_2 X(x)T(t - \tau ).
	\end{split}
	\notag
\end{equation}
Collecting corresponding terms, we obtain
\begin{equation}
	\begin{split}
		[T'(t) - c_1 T(t) - c_2 T(t - \tau )]X(x) &= [a_1^2 T(t) + a_2^2 T(t - \tau )] X'(x).
	\end{split}
	\notag
\end{equation}
After separating the variables
\begin{equation}
	\frac{X''(x)}{X(x)} = \frac{{T'(t) - c_1 T(t) - c_2 T(t - \tau )}}{{a_1^2 T(t) + a_2^2 T(t - \tau )}} =  -\lambda^{2}, \notag
\end{equation}
the equation decomposes into two equations
\begin{align}
	X''(x) + \lambda^{2} X(x) &= 0, \label{EQ2_8} \\
	T'(t) - (c_1  - \lambda^{2} a_1^2 )T(t) + (c_2  - \lambda^{2} a_2^2) T(t - \tau ) &= 0. \label{EQ2_9}
\end{align}
Taking into account the boundary conditions for $u_{1}$, we get zero boundary conditions for $X$:
\begin{equation}
	X(0) = 0, \quad X(l) = 0. \notag
\end{equation}
Thus, nontrivial solutions of Equation (\ref{EQ2_8}) exist only for
\begin{equation}
	\lambda^{2}  = \lambda_n^{2}  = \left(\frac{\pi n}{l}\right)^{2}, \quad n \in \mathbb{N}.
	\label{EQ2_10}
\end{equation}
The latter are eigenvalues corresponding to the eigenfunctions
\begin{equation}
	X_n(x) = \sin \frac{\pi n}{l} x, \quad n \in \mathbb{N},
	\notag
\end{equation}
being solutions of the Sturm \& Liouville problem for the negative Dirichlet-Laplacian in $(0, l)$.
Plugging the values of $\lambda_n$, $n \in \mathbb{N}$, obtained in Equation (\ref{EQ2_10}) into Equation (\ref{EQ2_9}),
we obtain a countable system of decoupled ordinary delay differential equations
\begin{equation}
	\dot T_n(t) = \left[c_1 - \left(\frac{\pi n}{l}\right)^{2} a_{1}^{2}\right] T_n (t) + \left[c_2 - \left(\frac{\pi n}{l}\right)^{2} a_{2}^{2}\right] T_n (t - \tau ), \quad n \in \mathbb{N}.
	\label{EQ2_11}
\end{equation}
Similar to Section \ref{SECTION_HOM_EQUATION_NO_DELAY},
we consider the trivial extension of $(X_{n})_{n \in \mathbb{N}}$ to an orthogonal basis of $L^{2}\big((-\tau, 0), L^{2}\big((0, l)\big)\big)$.
Thus, if $\Phi \in L^{2}\big((-\tau, 0), L^{2}\big((0, l)\big)\big)$, we obtain for a.e. $(x, t) \in [0, l] \times [-\tau, 0]$
\begin{equation}
	\Phi(x, t) = \sum\limits_{k = 1}^\infty \Phi_n (t)  \sin \frac{\pi n}{l}x \text{ with }
	\Phi _n (t) = \frac{2}{l}\int\limits_0^l \Phi(\xi, t) \sin \frac{\pi n}{l}\xi \mathrm{d} \xi.
	\notag
\end{equation}
Taking into account Equation (\ref{EQ2_5}), we obtain initial conditions for the countably many ordinary delay differential equations (\ref{EQ2_9}) in the form
\begin{equation}
	T_n(t) = \Phi_n (t), \quad n \in \mathbb{N}, \quad t \in (-\tau, 0), \notag
\end{equation}
where
\begin{equation}
	\Phi_n (t) = \frac{2}{l}\int\limits_0^l \left\{ {\varphi (\xi ,t) - \left[ {\mu _1 (t)
	+ \frac{\xi }{l}\left[ {\mu _2 (t) - \mu _1 (t)} \right]} \right]} \right\} \sin \frac{\pi n}{l}\xi \mathrm{d} \xi.
	\notag
\end{equation}
These equations can be solved explicitly using well-known results on scalar linear ordinary delay differential equations (see, e.g., \cite{KhuShu2005}, \cite{KuKhu2011}).
In the following, we briefly outline this theory studying ordinary delay differential equations of the form
\begin{equation}
	\dot x(t) = ax(t) + bx(t - \tau) \text{ for } t \geq 0, \quad x(t) = \beta(t) \text{ for } t \in [-\tau, 0]
	\label{EQ2_12}
\end{equation}
where $\beta \in \mathcal{C}^{1}([-\tau, 0])$ is an arbitrary function representing the initial condition.

\begin{defin}
For $b \in \mathbb{R}$, $\tau > 0$,
the function $\mathbb{R} \ni t \mapsto \exp_\tau\{b, t\}$ given by
\begin{equation}
	\exp_\tau\{b, t\} :=
	\left\{
	\begin{array}{cc}
		0, & -\infty < t < -\tau, \\
		1, & -\tau \leq t < 0, \\
		1 + b \frac{t}{1!}, & 0 \leq t < \tau, \\
		1 + b \frac{t}{1!} + b^2 \frac{(t - \tau)^2}{2!}, & \tau \leq t < 2\tau,  \\
		\dots, & \dots \\
		1 + b \frac{t}{1!} + \cdots + b^k \frac{[t - (k - 1)\tau]^k}{k!}, & (k - 1) \tau \leq t < k\tau, k \in \mathbb{N}
	\end{array}\right.
	\notag
\end{equation}
is called the delayed exponential function.
\end{defin}

In \cite{KhuShu2005}, it has been proved that the delayed exponential function $\exp_{\tau}\{b, \cdot\}$ is the unique solution of the linear homogeneous equation with pure delay
\begin{equation}
	\dot x(t) = bx(t - \tau) \text{ for } t \geq 0
	\notag
\end{equation}
satisfying the identity initial condition $x(t) \equiv 1$ for $t \in [-\tau, 0]$.

It has further been shown that the solution to the general Cauchy problems for ordinary delay differential equations
also admit solutions of similar type. Namely, the following statements have been proved.
\begin{lemma}
	The function
	\begin{equation}
		x_0(t) = e^{at} \exp_\tau\{b_1, t\}, t \ge 0,
		\notag
	\end{equation}
	with $b_{1} := e^{-a \tau} b$
	is the unique solution of Equation (\ref{EQ2_12}) satisfying the initial condition
	\begin{equation}
		x_0 (t) = e^{at} \text{ for } t \in [-\tau, 0]. \notag
	\end{equation}
\end{lemma}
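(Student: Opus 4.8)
The plan is to reduce the equation with both an instantaneous and a delayed term to the pure-delay equation whose solution has already been identified with the delayed exponential just above the lemma. To this end I would introduce the substitution $x(t) = e^{at} y(t)$, the idea being to absorb the reaction term $a x(t)$ into the exponential prefactor so that only the delayed term survives.

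Carrying this out, differentiation gives $\dot x(t) = a e^{at} y(t) + e^{at} \dot y(t)$, while the right-hand side of Equation (\ref{EQ2_12}) evaluated at $x(t) = e^{at} y(t)$ reads $a e^{at} y(t) + b e^{a(t-\tau)} y(t-\tau)$. Equating the two, cancelling the common summand $a e^{at} y(t)$, and dividing by $e^{at}$, I would obtain $\dot y(t) = b e^{-a\tau} y(t-\tau) = b_{1} y(t-\tau)$, so that $y$ solves the homogeneous equation with pure delay and coefficient $b_{1}$. Moreover, since $x(t) = e^{at}$ on $[-\tau, 0]$, the transformed unknown satisfies $y(t) = e^{-at} x(t) \equiv 1$ there, i.e. $y$ carries the identity initial condition. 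At this point I would invoke the result quoted from \cite{KhuShu2005}: the unique solution of $\dot y(t) = b_{1} y(t-\tau)$ subject to $y \equiv 1$ on $[-\tau, 0]$ is precisely $\exp_{\tau}\{b_{1}, t\}$. Undoing the substitution then yields $x_{0}(t) = e^{at} \exp_{\tau}\{b_{1}, t\}$, as claimed.

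For the uniqueness assertion I would note that the map $x \mapsto y := e^{-at} x$ is a bijection between solutions of (\ref{EQ2_12}) with initial datum $e^{at}$ and solutions of the pure-delay problem with identity initial datum; since the latter is uniquely solvable by the cited result, so is the former. Alternatively, uniqueness follows directly by the method of steps: on each interval $[(k-1)\tau, k\tau]$ the delay equation becomes an inhomogeneous linear ordinary differential equation whose forcing term $b\, x(t-\tau)$ is already determined by the values of $x$ on the preceding interval, and the value at the left endpoint is inherited by continuity, so the elementary existence and uniqueness theory for linear ordinary differential equations pins $x$ down on that interval. I expect no real obstacle here; the only point demanding care is the bookkeeping at the breakpoints $t = k\tau$, where $\dot x_{0}$ need not be continuous, so the solution should be understood as continuous on $[-\tau, \infty)$, continuously differentiable on each step interval, and satisfying the equation in the one-sided sense at the junctions.
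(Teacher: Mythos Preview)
Your argument is correct. The substitution $x(t)=e^{at}y(t)$ cleanly reduces (\ref{EQ2_12}) to the pure-delay equation $\dot y(t)=b_{1}y(t-\tau)$ with identity initial datum, after which the result quoted from \cite{KhuShu2005} applies directly; the uniqueness discussion via the bijection (or, alternatively, the method of steps) is also sound.

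Note, however, that the paper does not actually supply a proof of this lemma: it is stated as one of the auxiliary facts imported from \cite{KhuShu2005} and \cite{KuKhu2011} (see the sentence ``Namely, the following statements have been proved'' immediately preceding the lemma). So there is no in-paper argument to compare against. That said, your reduction is precisely the standard one and is almost certainly what underlies the cited reference, so your write-up would serve perfectly well as the missing proof.
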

\begin{theorem}
	Equation (\ref{EQ2_12}) subject to general initial conditions
	$x(t) = \beta(t)$, $t \in [-\tau, 0]$, with $\beta \in \mathcal{C}^{1}\big([-\tau, 0]\big)$ is uniquely solved by a function
	$u \in \mathcal{C}^{0}\big([-\tau, \infty)\big) \cap \mathcal{C}^{1}\big([-\tau, 0]\big) \cap \mathcal{C}^{1}\big([0, \infty)\big)$ given via
	\begin{equation}
		\begin{split}
			x(t) &= e^{a(t + \tau )} \exp _\tau  \{ b_1 ,t\} \beta (-\tau ) + \\
			&\phantom{=}\;\; \int\limits_{ - \tau }^0 {e^{a(t - s)} \exp _\tau  \{ b_1 ,t - \tau  - s\} \left[ {\beta '(s) - a\beta (s)} \right] \mathrm{d}s}.
		\end{split}
		\label{EQ2_13}
	\end{equation}
\end{theorem}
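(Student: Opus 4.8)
The plan is to prove uniqueness by the method of steps and then verify that the explicit expression in (\ref{EQ2_13}) solves the initial value problem; combined, these give that it is \emph{the} unique solution in the asserted regularity class. For uniqueness, let $w$ be the difference of two solutions, so that $w \equiv 0$ on $[-\tau, 0]$ and $\dot w(t) = a w(t) + b w(t - \tau)$ for $t \geq 0$. On $[0, \tau]$ the delayed term vanishes, whence $\dot w = a w$ with $w(0) = 0$ forces $w \equiv 0$; iterating this argument over the intervals $[k\tau, (k+1)\tau]$, $k \in \mathbb{N}$, yields $w \equiv 0$ everywhere.

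For existence I would first recast (\ref{EQ2_13}) in terms of the fundamental solution $x_{0}(t) = e^{at} \exp_{\tau}\{b_{1}, t\}$ supplied by the preceding Lemma. Using $b_{1} = e^{-a\tau} b$, one checks the kernel identities
\begin{equation}
	e^{a(t - s)} \exp_{\tau}\{b_{1}, t - \tau - s\} = e^{a\tau} x_{0}(t - \tau - s), \quad
	e^{a(t + \tau)} \exp_{\tau}\{b_{1}, t\} = e^{a\tau} x_{0}(t),
	\notag
\end{equation}
so that (\ref{EQ2_13}) becomes $x(t) = e^{a\tau} \beta(-\tau) x_{0}(t) + e^{a\tau} \int_{-\tau}^{0} x_{0}(t - \tau - s)\big[\beta'(s) - a\beta(s)\big] \mathrm{d}s$, exhibiting $x$ as a superposition of time-shifted copies of the fundamental solution. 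To verify the initial condition, note that for $t \in [-\tau, 0]$ we have $\exp_{\tau}\{b_{1}, t\} = 1$, while $\exp_{\tau}\{b_{1}, t - \tau - s\}$ equals $1$ for $s \leq t$ and vanishes for $s > t$; the integral therefore collapses to $\int_{-\tau}^{t} e^{a(t - s)}\big[\beta'(s) - a\beta(s)\big] \mathrm{d}s$, and since $\frac{\mathrm{d}}{\mathrm{d}s}\big(e^{a(t - s)} \beta(s)\big) = e^{a(t - s)}\big[\beta'(s) - a\beta(s)\big]$ this evaluates to $\beta(t) - e^{a(t + \tau)} \beta(-\tau)$, which cancels the first term and leaves $x(t) = \beta(t)$.

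To verify the equation for $t > 0$, I would differentiate. The boundary term contributes $a e^{a\tau}\beta(-\tau) x_{0}(t) + b e^{a\tau}\beta(-\tau) x_{0}(t - \tau)$ upon invoking $\dot x_{0}(t) = a x_{0}(t) + b x_{0}(t - \tau)$, which holds for $t > 0$ by the Lemma. For the integral I would differentiate under the integral sign --- legitimate since $x_{0}$ is continuous and piecewise $\mathcal{C}^{1}$, its only corner occurring where its argument crosses $0$ --- and then split the $s$-range according to the sign of $t - \tau - s$: on the part where $t - \tau - s > 0$ one again uses $\dot x_{0} = a x_{0} + b x_{0}(\cdot - \tau)$, whereas on the part where $t - \tau - s \in (-\tau, 0)$ one uses that $x_{0}(u) = e^{au}$ there, hence $\dot x_{0} = a x_{0}$. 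Reassembling the pieces produces $a$ times the integral in $x(t)$ plus $b \int_{-\tau}^{0} x_{0}(t - 2\tau - s)\big[\beta'(s) - a\beta(s)\big] \mathrm{d}s$, and identifying the latter with the corresponding part of $x(t - \tau)$ closes the computation to $\dot x(t) = a x(t) + b x(t - \tau)$.

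The main obstacle is the transitional window $0 < t < \tau$, where the argument $t - \tau - s$ of the shifted fundamental solution sweeps through zero inside the range of integration, so that the piecewise definition of $x_{0}$ --- and in particular its corner --- is genuinely felt. Here one must show that the extra contribution $\int_{t - \tau}^{0} x_{0}(t - 2\tau - s)\big[\beta'(s) - a\beta(s)\big] \mathrm{d}s$ vanishes; this holds precisely because $t - 2\tau - s < -\tau$ on that subinterval while $x_{0}$ is supported on $[-\tau, \infty)$, so the truncated integral still coincides with the delayed term, and continuity of all quantities lets the two regimes match at $t = \tau$. Finally, the regularity claim follows from the method of steps, which yields $x \in \mathcal{C}^{1}$ on each $[k\tau, (k+1)\tau]$, together with $x = \beta \in \mathcal{C}^{1}$ on $[-\tau, 0]$; continuity across $t = 0$ is guaranteed by $x(0) = \beta(0)$, while the one-sided derivatives generally differ, which accounts precisely for the stated class $\mathcal{C}^{0}\big([-\tau, \infty)\big) \cap \mathcal{C}^{1}\big([-\tau, 0]\big) \cap \mathcal{C}^{1}\big([0, \infty)\big)$.
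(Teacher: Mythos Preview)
The paper does not actually prove this theorem: it is quoted from \cite{KhuShu2005}, \cite{KuKhu2011} as part of the sentence ``we briefly outline this theory'' and is immediately followed by a remark, with no proof supplied. So there is no paper proof to compare against; your proposal stands on its own.

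Your argument is correct. Uniqueness by the method of steps is standard and complete as written. For existence, your key observation that the kernel $e^{a(t-s)}\exp_{\tau}\{b_{1},t-\tau-s\}$ equals $e^{a\tau}x_{0}(t-\tau-s)$ is exactly the right way to exploit the preceding Lemma, and the verification of the initial condition via the antiderivative $e^{a(t-s)}\beta(s)$ is clean. One simplification: you split the $s$-integral according to the sign of $t-\tau-s$ and then argue separately that the ``missing'' delayed piece vanishes. This is fine, but it is cleaner to note once that the identity $\dot{x}_{0}(u)=ax_{0}(u)+bx_{0}(u-\tau)$ in fact holds for \emph{all} $u>-\tau$ (not just $u\geq 0$), because on $(-\tau,0)$ one has $x_{0}(u)=e^{au}$ while $x_{0}(u-\tau)=0$; with this observation the splitting and the subsequent ``main obstacle'' paragraph become unnecessary, and the computation $\dot{x}(t)=ax(t)+bx(t-\tau)$ follows in one line for every $t>0$ away from the single corner of $x_{0}$. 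The regularity discussion at the end is accurate.
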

\begin{remark}
	Using standard approximation arguments, the previous theorem can be easily generalized to the case $\beta \in W^{1, p}\big((-\tau, 0)\big)$, $p \in [1, \infty)$.
	The solution $u \in W^{1, p}_{\mathrm{loc}}\big((-\tau, \infty)\big)$
	satisfies then the equation (\ref{EQ2_12}) in distributional sense.
	The initial conditions can be interpreted in the sense of the continuous embedding
	$W^{1, p}\big((-\tau, \tau)\big) \hookrightarrow \mathcal{C}^{0}_{b}\big([-\tau, \tau]\big)$.
\end{remark}

We return now to Equation (\ref{EQ2_11}) with corresponding initial conditions.
Introducing the notation
\begin{equation}
	D_n = \left[c_2  - \left(\frac{\pi n}{l} a_{2}\right)^{2} \right] e^{-\left[c_1  - \left(\frac{\pi n}{l} a_{1}\right)^{2} \right] \tau}, \quad
	L_n = c_1 - \left(\frac{\pi n}{l} a_{1}\right)^{2}
	\label{EQ2_14}
\end{equation}
and using Equation (\ref{EQ2_13}), the solution to the problem (\ref{EQ2_11}) is given by
\begin{equation}
	\begin{split}
		T_n(t) &= e^{L_n (t + \tau)} \exp_\tau\{D_n, t\} \Phi_n(-\tau) + \\
		&\phantom{=}\;\; \int\limits_{ - \tau }^0 e^{L_n (t - s)} \exp _\tau  \{ D_n ,t - \tau  - s\} \left[ {\Phi '_n (s) - L_n \Phi _n (s)} \right] \mathrm{d}s.
	\end{split}
	\notag
\end{equation}
Thus, the solution to the initial boundary value problem (\ref{EQ2_4}) formally reads as
\begin{align}
	u_1 (x,t) &= \sum\limits_{n = 1}^\infty  \left\{ {e^{L_n (t + \tau )} \exp } \right._\tau \{ D_n ,t\} \Phi_n (-\tau) + \label{EQ2_15} \\
	&\phantom{=}\;\; \left. { + \int\limits_{-\tau }^0 {e^{L_n (t - s)} \exp\{D_n, t - \tau - s\} [\Phi '(s) - L_n \Phi (s)]ds} } \right\} \sin \frac{{\pi n}}{l}x \notag
\end{align}
with
\begin{equation}
	\Phi_n (t) = \frac{2}{l} \int\limits_0^l \left\{\varphi (\xi ,t) - \left[ {\mu _1 (t) + \frac{\xi }{l}[\mu _2 (t) - \mu _1 (t)]} \right] \right\} \sin\frac{{\pi n}}{l}\xi \mathrm{d} \xi.
	\label{EQ2_16}
\end{equation}
Conditions assuring the convergence of this Fourier series and the regularity of the limit function will be discussed later in this section.

\subsection{Non-Homogeneous Equation with Delay}
Next, we consider Equation (\ref{EQ2_6}), viz.,
\begin{equation}
	\begin{split}
		\frac{{\partial u_2 (x,t)}}{{\partial t}} &= a_1^2 \frac{{\partial ^2 u_2 (x,t)}}{{\partial x^2 }} + a_2^2 \frac{{\partial ^2 u_2 (x,t - \tau )}}{{\partial x^2 }} + \\
		&\phantom{=}\;\; c_1 u_2 (x,t) + c_2 u_2 (x,t - \tau ) + F(x, t)
	\end{split}
	\notag
\end{equation}
subject to zero boundary conditions
$u_{2}(0, t) = u_{2}(l, t) = 0$, $t > -\tau$, and zero initial conditions
$u_{2}(x, t) = 0$, $x \in (0, l)$, $t \in (-\tau, 0)$.
The solution will be obtained as a Fourier series with respect to the orthogonal eigenfunction basis of $L^{2}\big((-\tau, T), L^{2}\big((0, l)\big)\big)$, $T > 0$ arbitrary, but fixed
(cp. Section \ref{SECTION_NON_HOM_EQUATION_NO_DELAY}), i.e.,
\begin{equation}
	u_2(x,t) = \sum\limits_{n = 1}^\infty u_{2n} (t) \sin \frac{{\pi n}}{l}x, \quad n \in \mathbb{N}.
	\label{EQ2_17}
\end{equation}
Assuming $F \in L^{2}\big((0, T), L^{2}\big((0, l)\big)\big)$, the Fourier expansion of $F$ reads as
\begin{equation}
	F(x,t) = \sum\limits_{n = 1}^\infty  {F_n (t)e^{ - \frac{1}{2}\alpha x} \sin \frac{{\pi n}}{l}x} \text{ with }
	F_n (t) = \frac{2}{l}\int\limits_0^l F(s,t)e^{ - \frac{1}{2}\alpha \xi } \sin \frac{{\pi n}}{l}\xi \mathrm{d}\xi, \notag
\end{equation}
where $F$ is given in Equation (\ref{EQ2_7}) via
\begin{equation}
	\begin{split}
		F(x,t) &= f(x,t) - \frac{\mathrm{d}}{{\mathrm{d}t}}\left\{ {\mu _1 (t) + \frac{x}{l}[\mu _2 (t) - \mu _1 (t)]} \right\} + \\
		&\phantom{=}\;\; c_1 \left\{ {\mu _1 (t) + \frac{x}{l}[\mu _2 (t) - \mu _1 (t)]} \right\} + \\
		&\phantom{=}\;\; c_2 \left\{ {\mu _1 (t - \tau ) + \frac{x}{l}[\mu _2 (t - \tau ) - \mu _1 (t - \tau )]} \right\}.
	\end{split}
	\notag
\end{equation}

Then each of the functions $u_{2n}$, $n \in \mathbb{N}$, is the mild solution of the ordinary delay differential equation
\begin{equation}
	\dot u_{2n}(t) = \left[c_1  - \left(\frac{\pi n}{l} a_{1}\right)^{2} \right] u_{2n} (t) +
	\left[c_2  - \left(\frac{\pi n}{l} a_{2}\right)^{2} \right] u_{2n} (t - \tau ) + F_n (t).
	\notag
\end{equation}
Using the notation from Equation (\ref{EQ2_14}), the latter can be rewritten as
\begin{equation}
	\dot u_{2n} (t) = L_n u_{2n} (t) + D_n e^{L_n \tau} u_{2n} (t - \tau) + F_n(t)
	\label{EQ2_18}
\end{equation}
subject to zero initial conditions $u_{2n}(t) = 0$, $t \in (-\tau, 0)$.

Again, we present some auxiliary results from \cite{KhuShu2005}, \cite{KuKhu2011} for non-homogeneous ordinary delay differential equation of the form
\begin{equation}
	\dot x(t) = ax(t) + bx(t - \tau ) + \rho(t) \text{ for } t > 0
	\label{EQ2_19}
\end{equation}
with zero initial conditions $x(t) = 0$, $t \in (-\tau, 0)$.
\begin{theorem}
	Let $g \in \mathcal{C}^{0}\big([0, \infty)\big)$.
	The unique solution $\overline{x} \in \mathcal{C}^{0}\big([-\tau, \infty)\big) \cap \mathcal{C}^{1}\big([-\tau, 0]\big) \cap \mathcal{C}^{1}\big([0, \infty)\big)$ 
	of Equation (\ref{EQ2_19}) subject to zero initial conditions is given by
	\begin{equation}
		\overline{x}(t) =
		\left\{\begin{array}{cl}
			0, & t \in [-\tau, 0), \\
			\int\limits_0^t e^{a(t - s)} \exp _\tau \{b_1, t - \tau - s\} \rho(s) \mathrm{d}s, & t > 0
		\end{array}\right.
		\label{EQ2_20}
	\end{equation}
	where $b_1 := e^{-a\tau} b$.
\end{theorem}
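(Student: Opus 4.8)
The plan is to read the statement as a variation-of-constants (Duhamel) formula for the scalar delay equation (\ref{EQ2_19}) and to treat existence and uniqueness separately. The central object is the Cauchy kernel
\begin{equation}
	G(\sigma) := e^{a \sigma} \exp_\tau\{b_1, \sigma - \tau\}, \quad b_1 = e^{-a \tau} b, \notag
\end{equation}
which enters the integral in (\ref{EQ2_20}) as $G(t - s)$. First I would record its three decisive properties: $G(\sigma) = 0$ for $\sigma < 0$; $G$ carries a unit jump at the origin, $G(0^{+}) = 1$ (because $\exp_\tau\{b_1, -\tau\} = 1$); and $G$ solves the homogeneous equation $\dot{G}(\sigma) = a G(\sigma) + b G(\sigma - \tau)$ for $\sigma > 0$. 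The last property I would verify directly, interval by interval: on $[0, \tau)$ the delayed argument lands in $[-\tau, 0)$, the delay term vanishes, and $G(\sigma) = e^{a\sigma}$, while on each subsequent interval the identity $b_1 = e^{-a \tau} b$ is exactly what matches the derivative of the polynomial increment in $\exp_\tau\{b_1, \cdot\}$ against the delayed value $b G(\sigma - \tau)$; this parallels the preceding Lemma, now carried out for the impulsive initial datum.

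For uniqueness I would argue by the method of steps. Any two solutions in the stated regularity class differ by a solution $y$ of the homogeneous equation with $y \equiv 0$ on $[-\tau, 0]$. On $[0, \tau]$ the delayed argument lies in $[-\tau, 0]$, so the delay term vanishes and $y$ solves the linear ordinary differential equation $\dot{y} = a y$ with $y(0) = 0$, forcing $y \equiv 0$ on $[0, \tau]$. The same argument then propagates to $[\tau, 2\tau]$, and inductively to every interval $[(k - 1)\tau, k\tau]$, yielding $y \equiv 0$ and hence uniqueness.

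For existence I would verify directly that $\overline{x}(t) = \int_0^t G(t - s) \rho(s) \, \mathrm{d}s$ satisfies (\ref{EQ2_19}), where $\rho$ denotes the continuous forcing. Differentiating under the integral sign via the Leibniz rule, the boundary contribution is $G(0^{+}) \rho(t) = \rho(t)$, while the interior term is $\int_0^t \partial_t G(t - s) \rho(s) \, \mathrm{d}s = a \overline{x}(t) + b \int_0^t G(t - \tau - s) \rho(s) \, \mathrm{d}s$. Since $G(t - \tau - s) = 0$ for $s > t - \tau$, the last integral equals $\int_0^{t - \tau} G(t - \tau - s) \rho(s) \, \mathrm{d}s = \overline{x}(t - \tau)$, so that $\dot{\overline{x}}(t) = a \overline{x}(t) + b \overline{x}(t - \tau) + \rho(t)$, as required; the initial condition $\overline{x} \equiv 0$ on $[-\tau, 0)$ is immediate from the definition.

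The hard part will be the careful treatment of the jump of $G$ at $\sigma = 0$: it is precisely this unit jump that produces the forcing term $\rho(t)$ upon Leibniz differentiation, and one must confirm that the interior integral is genuinely differentiable despite the corner of $\exp_\tau\{b_1, \cdot\}$ that makes $\dot{G}$ discontinuous at $\sigma = \tau$. Since $G$ is continuous on $(0, \infty)$ and $\dot{G}$ is bounded with at most finitely many jump discontinuities on each compact interval, convolution against the continuous datum $\rho$ smooths these out and yields $\overline{x} \in \mathcal{C}^{1}\big([0, \infty)\big)$ with continuous derivative; together with the trivial regularity on $[-\tau, 0]$ this gives membership in the claimed space $\mathcal{C}^{0}\big([-\tau, \infty)\big) \cap \mathcal{C}^{1}\big([-\tau, 0]\big) \cap \mathcal{C}^{1}\big([0, \infty)\big)$.
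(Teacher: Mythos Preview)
Your argument is correct. The paper itself does not prove this theorem: it is quoted verbatim as an auxiliary result from \cite{KhuShu2005}, \cite{KuKhu2011}, with no proof supplied. What you have written is therefore not a variant of the paper's approach but a self-contained proof where the paper offers none.

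Your strategy---identifying the fundamental kernel $G(\sigma) = e^{a\sigma}\exp_\tau\{b_1,\sigma-\tau\}$, checking that it solves the homogeneous equation with impulsive data (which is essentially the preceding Lemma shifted by $\tau$), establishing uniqueness via the method of steps, and then verifying the Duhamel formula by Leibniz differentiation---is the standard and correct route. Your handling of the two delicate points is sound: the unit jump $G(0^{+})=1$ indeed generates the forcing term $\rho(t)$, and the corners of $\dot G$ at integer multiples of $\tau$ do not obstruct $\overline{x}\in\mathcal{C}^{1}([0,\infty))$ because $G$ itself is continuous and $\dot G$ is piecewise continuous and bounded, so the convolution against continuous $\rho$ remains $\mathcal{C}^{1}$ (equivalently, one checks the identity on each open interval $((k-1)\tau,k\tau)$ and notes that the right-hand side $a\overline{x}(t)+b\overline{x}(t-\tau)+\rho(t)$ is globally continuous, forcing $\dot{\overline{x}}$ to be continuous across the interfaces). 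One small clarification worth making explicit: for $0\le t<\tau$ the integral $\int_0^t G(t-\tau-s)\rho(s)\,\mathrm{d}s$ vanishes entirely since $t-\tau-s<0$ throughout, which matches $\overline{x}(t-\tau)=0$ from the zero initial history.
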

\begin{remark}
	By exploiting standard approximation results,
	the previous theorem yields a unique mild solution
	$\overline{x} \in W^{1, p}_{\mathrm{loc}}\big((-\tau, \infty)\big)$, $p \in [1, \infty)$,
	for $\rho \in L^{p}_{\mathrm{loc}}\big((0, \infty)\big)$.
\end{remark}

Using Equation (\ref{EQ2_20}), the solution to the ordinary delay differential equation (\ref{EQ2_18}) subject to zero initial conditions can be written as
\begin{equation}
	u_{2n}(t) = \int\limits_0^t e^{L_n (t - s)} \exp_\tau\{D_n ,t - \tau - s\} F_n (s) \mathrm{d}s, \quad t \geq 0.
	\notag
\end{equation}
Hence, the solution to the non-homogeneous heat equation with delay (\ref{EQ2_7}) with zero boundary and initial conditions reads as
\begin{equation}
	u_2 (x,t) = \sum\limits_{n = 1}^\infty \left[ {\int\limits_0^t {e^{L_n (t - s)} \exp _\tau  \{ D_n, t - \tau - s\} } F_n (s)ds} \right]  \sin \frac{\pi n}{l} x.
	\label{EQ2_21}
\end{equation}
At the moment, Equation (\ref{EQ2_21}) gives only a formal representation formula.
Strict convergence conditions will though be given in the sequel.

Combing all relations from this section,
we obtain the solution to Equations (\ref{EQ2_1})--(\ref{EQ2_3}) in the form
\begin{align}
	u(x,t) &= \sum\limits_{n = 1}^\infty  \left\{ {e^{L_n (t + \tau )} \exp _\tau  \{ D_n ,t\} \Phi _n ( - \tau ) + } \right. \label{EQ2_22} \\
	&\phantom{=}\;\; \left. \int\limits_{ {-} \tau }^0 {e^{L_n (t {-} s)} \exp _\tau  \{ D_n ,t {-} \tau  {-} s\} [\Phi '_n (s) {-} L_n \Phi _n (s)]} \mathrm{d}s \right\} \sin \frac{{\pi n}}{l}x + \notag \\
	&\phantom{=}\;\; \sum\limits_{n = 1}^\infty  \left[ {\int\limits_0^t e^{L_n (t - s)} \exp _\tau  \{ D_n ,t - \tau  - s\} F_n (s) \mathrm{d} s} \right] \sin \frac{{\pi n}}{l}x + \notag  \\
	&\phantom{=}\;\; \mu _1(t) + \frac{x}{l} \left[\mu_2 (t) - \mu_1 (t)\right], \notag
\end{align}
where
\begin{align}
	D_n &= \left[c_2  - \left(\frac{\pi n}{l} a_{2}\right)^2 \right] e^{-\left[c_1  - \left(\frac{\pi n}{l} a_{1}\right)^2 \right] \tau}, \quad
	L_n = c_1 - \left(\frac{\pi n}{l} a_{1}\right)^2, \notag \\
	\Phi_n(t) &= \frac{2}{l}\int\limits_0^l \left\{\varphi (\xi ,t) - \left[{\mu _1 (t) + \frac{\xi }{l}\left[ {\mu _2 (t) - \mu _1 (t)} \right]} \right] \right\} \sin \frac{{\pi n}}{l}\xi \mathrm{d} \xi, \notag \\
	F_n (t) &= \frac{2}{l}\int\limits_0^l F(\xi ,t) \sin \frac{{\pi n}}{l}\xi \mathrm{d}\xi \text{ with } \label{EQ2_23} \\
	F(x, t) &= f(x, t) - \frac{\mathrm{d}}{\mathrm{d} t} \left[\mu_{1}(t) + \frac{x}{l} \left(\mu_{2}(t) - \mu_{1}(t)\right)\right] + \notag \\
	&\phantom{=}\;\; \left\{ {\mu _1 (t) {+} \frac{x}{l}[\mu _2 (t) {-} \mu _1 (t)]} \right\} {+} c_2 \left\{ {\mu _1 (t {-} \tau ) {+} \frac{x}{l}[\mu _2 (t {-} \tau ) {-} \mu _1 (t {-} \tau )]} \right\}. \notag
\end{align}

\subsection{Convergence of the Fourier Series}
Next, we discuss assumptions which assure the convergence of the Fourier series given in Equation (\ref{EQ2_22})
to the classical solution of the problem (\ref{EQ2_1})--(\ref{EQ2_3}).
We start with the following theorem giving rather technical conditions
which will later be interpreted in terms of Sobolev differentiability order.
\begin{theorem}
	\label{MAIN_THEOREM}
	Let $T > 0$ be fixed, $\delta > 0$ be arbitrary and
	let $m := \left\lceil\frac{T}{\tau}\right\rceil$.
	Further, let the functions $F$ and $\Phi$ defined from the data $f, \varphi, \mu_{1}, \mu_{2}$
	be such that that
	\begin{equation}
		F, F_{t} \in \mathcal{C}^{0}\big([0, l] \times [0, T]\big), \quad
		\Phi, \Phi_{t}, \Phi_{tt}, \Phi_{xx} \in \mathcal{C}^{0}\big([0, l] \times [-\tau, 0]\big) \notag
	\end{equation}
	and their Fourier coefficients $F_{n}$, $\Phi_{n}$, $n \in \mathbb{N}$,
	given in Equations (\ref{EQ2_16}), (\ref{EQ2_23}) satisfy the conditions
	\begin{equation}
		\begin{split}
			\lim\limits_{n \to \infty } n^{2m + 3 + \delta} |\Phi_n (-\tau)| &= 0, \\
			\lim\limits_{n \to \infty } n^{2m + 1 + \delta} 
			\max_{s \in [-\tau, 0]} \left[|\Phi''_{n}(s)| + n^{2} |\Phi'_{n}(s)| + n^{4} |\Phi_{n}(s)|\right] &= 0, \\
			\lim\limits_{n \to \infty } n^{2m - 1 + \delta}
			\max_{s \in [0, T]} \left[|F'_{n}(s)| + n^{2} |F_{n}(s)|\right] &= 0.
		\end{split}
		\label{EQ2_24}
	\end{equation}
	Then the Fourier series given in Equation (\ref{EQ2_22}) converges absolutely and uniformly with respect to $(x, t) \in [0, l] \times [0, T]$ to
	the classical solution $u$ of the problem (\ref{EQ2_1})--(\ref{EQ2_3}).
	Moreover, the Fourier series obtained by applying $\partial_{t}$, $\partial_{x}$ or $\partial_{xx}$ operators
	converge absolutely and unformly to $u_{t}$, $u_{x}$ or $u_{xx}$, respectively.
\end{theorem}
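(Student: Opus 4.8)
The plan is to establish absolute and uniform convergence of the series in (\ref{EQ2_22}) --- together with the three series obtained from it by termwise application of $\partial_{t}$, $\partial_{x}$, $\partial_{xx}$ --- via the Weierstrass $M$-test, and then to justify the termwise differentiation, so that the limit $u$ inherits the regularity demanded of a classical solution and, summand by summand, satisfies the delay equation (\ref{EQ2_1}). The entire difficulty is concentrated in the factors $e^{L_{n}(\cdot)}\exp_{\tau}\{D_{n},\cdot\}$: by (\ref{EQ2_14}) one has $L_{n}\sim-(\tfrac{\pi a_{1}}{l})^{2}n^{2}\to-\infty$, whereas $D_{n}=[c_{2}-(\tfrac{\pi n}{l}a_{2})^{2}]e^{-L_{n}\tau}$ grows \emph{doubly} exponentially, like $n^{2}e^{(\pi a_{1}/l)^{2}\tau\,n^{2}}$. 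Hence no crude bound on $\exp_{\tau}\{D_{n},\cdot\}$ alone can succeed; the point is that this offending exponential is \emph{exactly} annihilated by the prefactor.

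First I would record the algebraic identity behind this cancellation. Since $D_{n}^{\,j}=[c_{2}-(\tfrac{\pi n}{l}a_{2})^{2}]^{j}e^{-jL_{n}\tau}$, the generic monomial of the delayed exponential recombines as
\[
e^{L_{n}(t+\tau)}D_{n}^{\,j}\frac{[t-(j-1)\tau]^{j}}{j!}=\Big[c_{2}-\big(\tfrac{\pi n}{l}a_{2}\big)^{2}\Big]^{j}\,e^{L_{n}r}\,\frac{r^{j}}{j!},\qquad r:=t-(j-1)\tau\ge 0,
\]
so that the double exponential disappears and one is left with a polynomial factor $\sim n^{2j}$ tempered by the Gaussian weight $e^{L_{n}r}\sim e^{-(\pi a_{1}/l)^{2}n^{2}r}$. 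Because on $[0,T]$ the delayed exponential is, by the method of steps, a \emph{finite} sum of at most $m+1$ such monomials --- one for each delay interval $[(k-1)\tau,k\tau]$, $k=1,\dots,m$ --- these bounds are to be proved interval by interval and then patched into a single estimate uniform in $t\in[0,T]$. Propagating the estimate across the $m$ steps is where the coupling $a_{2}^{2}\,\partial_{xx}u(\cdot,\cdot-\tau)$ enters, through the recursion $\tfrac{d}{dt}\exp_{\tau}\{D_{n},t\}=D_{n}\exp_{\tau}\{D_{n},t-\tau\}$; each step multiplies the coefficient estimate by a factor $O(n^{2})$, which is precisely the origin of the exponent $2m$ in the hypotheses (\ref{EQ2_24}). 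The same recursion shows that one $\partial_{t}$ costs a further factor $O(n^{2})$ (arising either from $L_{n}$ or from the recombined $D_{n}$-term), while $\partial_{x}^{p}\sin\tfrac{\pi n}{l}x$ contributes $(\tfrac{\pi n}{l})^{p}\sim n^{p}$.

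Combining these kernel estimates with the explicit coefficients, I would bound the $n$-th summand of (\ref{EQ2_22}), and of each of its $\partial_{t}$-, $\partial_{x}$- and $\partial_{xx}$-derivatives, uniformly on $[0,l]\times[0,T]$, by a constant times a fixed power $n^{p(m)}$ --- the power increasing by $2$ per delay interval, together with the extra $n^{2}$ from $\partial_{t}$ or $\partial_{xx}$ and the extra $n$ from each $\partial_{x}$ --- multiplied by the data combinations
\[
|\Phi_{n}(-\tau)|+\max_{s\in[-\tau,0]}\big[|\Phi_{n}''(s)|+n^{2}|\Phi_{n}'(s)|+n^{4}|\Phi_{n}(s)|\big]+\max_{s\in[0,T]}\big[|F_{n}'(s)|+n^{2}|F_{n}(s)|\big].
\]
The hypotheses (\ref{EQ2_24}) are calibrated exactly so that each resulting product is $o(n^{-1-\delta})$ and hence summable; the Weierstrass $M$-test then delivers the asserted absolute and uniform convergence of all four series on $[0,l]\times[0,T]$. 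Uniform convergence of the differentiated series legitimises termwise differentiation, so $u\in\mathcal{C}^{0}$ with $u_{t},u_{xx}\in\mathcal{C}^{0}$ on $[0,l]\times[0,T]$, while the assumed regularity $\Phi,\Phi_{xx}\in\mathcal{C}^{0}\big([0,l]\times[-\tau,0]\big)$ supplies the continuity of $u_{xx}$ on the history interval required by the definition of a classical solution. By construction each summand of $u_{1}$, resp.\ $u_{2}$, solves the decoupled delay ODE (\ref{EQ2_9}), resp.\ (\ref{EQ2_18}), so $u=u_{1}+u_{2}+u_{3}$ satisfies (\ref{EQ2_1}); the splitting enforces the boundary data (\ref{NEW_DELAY_EQ_3}) through $u_{3}$ and the initial data (\ref{NEW_DELAY_EQ_2}) through $\Phi$, and uniqueness was already established. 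The main obstacle is the kernel analysis of the second paragraph: isolating the exact cancellation between $e^{L_{n}(\cdot)}$ and the doubly-exponential $D_{n}$, and then carrying a \emph{$t$-uniform} polynomial-in-$n$ bound across all $m$ method-of-steps intervals while tracking its degradation under $\partial_{t}$; once this is secured, the $M$-test and the verification that $u$ solves the problem are routine.
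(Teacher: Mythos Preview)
Your proposal is correct and follows essentially the same route as the paper: both expand $\exp_{\tau}\{D_{n},\cdot\}$ as a finite sum on each interval $[(k-1)\tau,k\tau)$, exploit the exact cancellation $e^{L_{n}(t+\tau)}D_{n}^{\,j}=\big[c_{2}-(\tfrac{\pi n}{l}a_{2})^{2}\big]^{j}e^{L_{n}(t-(j-1)\tau)}$ to reduce the doubly-exponential factor to a polynomial $O(n^{2j})$, observe that the last term $j=k$ loses the exponential damping as $t\downarrow(k-1)\tau$, and then use the decay hypotheses (\ref{EQ2_24}) to feed the Weierstrass $M$-test. The paper carries this out by writing the three series $S_{1},S_{2},S_{3}$ explicitly term by term, whereas your presentation packages the same computation more conceptually, but the argument is the same.
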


\begin{proof}
	We write the series from Equation (\ref{EQ2_22}) in the form
	\begin{equation}
		u(x, t) = S_1(x,t) + S_2 (x,t) + S_3(x,t) + \mu_1 (t) + \frac{x}{l}\left[ {\mu _2 (t) - \mu _1 (t)} \right],
		\notag
	\end{equation}
	where
	\begin{equation}
	\begin{split}
		S_1 (x,t) &= \sum\limits_{n = 1}^\infty {A_n (t) \sin \frac{{\pi n}}{l}x}, \quad
		S_2 (x,t) = \sum\limits_{n = 1}^\infty {B_n (t) \sin \frac{{\pi n}}{l}x}, \\
		S_3 (x,t) &= \sum\limits_{n = 1}^\infty {C_n } (t) \sin \frac{{\pi n}}{l}x, \quad
		A_n (t) = e^{L_n (t + \tau)} \exp_\tau\{D_n ,t\} \Phi_n(-\tau ), \\
		B_n (t) &= \int\limits_{ - \tau }^0 {e^{L_n (t - s)} \exp_\tau\{D_n ,t - \tau  - s\} [\Phi'_n (s) - L_n \Phi _n (s)]} \mathrm{d}s, \\
		C_n (t) &= \int\limits_0^t {e^{L_n (t - s)} \exp _\tau  \{ D_n ,t - \tau - s\}} F_n(s)\mathrm{d}s.
	\end{split}
	\notag
	\end{equation}
	\begin{enumerate}
		\item First, we consider the series $S_{1}$.
		For any fixed time $t^{\ast} \in [0, T]$ with $(k - 1) \tau \leq t^{\ast} < k\tau$, $k \leq m$, we get
		\begin{equation}
			\begin{split}
				A_n (t^\ast) &= e^{L_n (t^\ast + \tau)} \exp_\tau\{D_n, t^\ast\} \Phi_n (-\tau) \\
				&= e^{L_n (t^\ast  + \tau)} \left\{{1 + D_n \frac{{t^\ast}}{1!} + D_n^2 \frac{{(t^\ast  - \tau )^2 }}{2!} + \cdots + D_n^k \frac{{[t^\ast- (k - 1)\tau ]^k}}{{k!}}} \right\}\Phi_n (-\tau).
			\end{split}
			\notag
		\end{equation}
		Therefore, the series reads as
		\begin{equation}
			\begin{split}
				S_1(x,t^\ast) &= \sum\limits_{n = 1}^\infty  e^{L_n (t^\ast + \tau )} \Phi_n (-\tau) \sin \frac{{\pi n}}{l}x +  \\
				&\phantom{=}\;\; \frac{{t^\ast}}{{1!}}\sum\limits_{n = 1}^\infty  {e^{L_n (t^\ast  + \tau)} D_n \Phi _n (-\tau) \sin \frac{{\pi n}}{l}x + } \\
				&\phantom{=}\;\; \frac{{[t - (k - 2)\tau ]^{k - 1} }}{{(k - 1)!}}\sum\limits_{n = 1}^\infty  {e^{L_n (t^\ast + \tau)} D_n^{k - 1} \Phi_n(-\tau) \sin \frac{{\pi n}}{l}x + } \\
				&\phantom{=}\;\; \frac{{[t - (k - 1)\tau ]^k }}{{k!}}\sum\limits_{n = 1}^\infty  {e^{L_n (t^\ast + \tau )} D_n^k \Phi_n (-\tau) \sin \frac{{\pi n}}{l}x}.
			\end{split}
			\notag
		\end{equation}
		Plugging $L_{n}$ and $D_{n}$ from Equation (\ref{EQ2_14}) yields
		\begin{equation}
			\begin{split}
				S_1(x,t^\ast) &= \sum\limits_{n = 1}^\infty e^{\left[c_1 - \left(\frac{\pi n}{l} a_{1}\right)^{2} \right](t^\ast + \tau)} \Phi_n (-\tau) \sin \frac{\pi n}{l}x +  \\
				&\phantom{=}\;\;{+} \frac{{t^\ast}}{1!}\sum\limits_{n {=} 1}^\infty  e^{\left[c_1 - \left(\frac{\pi n}{l} a_{1}\right)^{2} \right] t^\ast} \left[c_2 - \left(\frac{\pi n}{l} a_{2}\right)^{2} \right] \Phi_n(-\tau) \frac{{\pi n}}{l}x + \dots \\
				&\phantom{=}\;\; \dots + \frac{{[t^\ast - (k - 2)\tau]^{k - 1} }}{{(k - 1)!}}\sum\limits_{n = 1}^\infty  {e^{\left[c_1 - \left(\frac{\pi n}{l} a_{1}\right)^{2} \right]\left[{t^\ast  - (k - 2)\tau}\right]}} \times \\
				&\phantom{=}\;\;\times \left[c_2 - \left(\frac{\pi n}{l} a_{2}\right)^{2} \right]^{k - 1} \Phi_n(-\tau) \sin \frac{{\pi n}}{l}x + \frac{{[t^\ast  - (k - 1)\tau ]^k}}{k!} \times \\
				&\phantom{=}\;\;\times \sum\limits_{n = 1}^\infty  {e^{\left[c_1 - \left(\frac{\pi n}{l} a_{1}\right)^{2} \right] \left[ {t^\ast - (k - 1)\tau} \right]} \left[c_2 - \left(\frac{\pi n}{l} a_{2}\right)^{2} \right]^k \Phi_n (-\tau)\sin \frac{{\pi n}}{l}x}.
			\end{split}
			\notag
		\end{equation}
		On the strength of condition $(k - 1) \tau \le t^\ast < k \tau$,
		for sufficiently large $n$, for which
		\begin{equation}
			c_1 - \left(\frac{\pi n}{l} a_{1}\right)^{2} < 0
			\notag
		\end{equation}
		holds true, the argument of the exponential function in the series becomes negative.
		Consider first $k$ terms.
		Since $t^\ast - j \tau > 0$, $j = - 1, \dots, k - 2$, these $k$ series converge absolutely and uniformly.

		Consider now the $(k+1)$-st term. The latter is given as a series of the following form
		\begin{equation}
			S_1^{k + 1}(x, t^\ast) = \frac{{[t^\ast - (k - 1)\tau ]^k }}{k!}\sum\limits_{n = 1}^\infty e^{\left[c_1 - \left(\frac{\pi n}{l} a_{1}\right)^{2} \right] \left[t^\ast   - (k - 1)\tau\right]} \times \notag
		\end{equation}
		\begin{equation}
			\times \left[c_2 - \left(\frac{\pi n}{l} a_{2}\right)^{2} \right]^k \Phi_n(-\tau) \sin \frac{{\pi n}}{l}x. \notag
		\end{equation}
		For $t^\ast \to (k - 1)\tau$, the argument of the exponential function approaches zero.
		Therefore, the series converges, but in general not uniformly with respect to $t^\ast \to (k - 1)\tau$.
		The latter is though maintained by our theorem assumptions
		since the Fourier coefficients $\Phi_n(-\tau)$ are assumed to be decaying sufficiently rapidly for $n \to \infty$, viz.,
		\begin{equation}
			\lim_{n \to \infty} n^{2k + 3 + \delta} |\Phi_n(-\tau)| \leq \lim_{n \to \infty} n^{2m + 3 + \delta} |\Phi_n (-\tau)| = 0.
			\notag
		\end{equation}
		Moreover, we can similarly conclude that the same holds for $\partial_{t} S_{1}$ and $\partial_{xx} S_{1}$
		since $\exp_{\tau}\{b, \cdot\}$ is continuously differentiable for $t \geq 0$
		and the application of $\partial_{t}$ and $\partial_{xx}$ operators corresponds, roughly speaking, to a term-wise multiplication with $n^{2}$.

		\item Next, we consider the second series $S_{2}$.
		For an arbitrary $t^{\ast} \in [0, T]$ with $(k - 1) \tau \leq t^{\ast} < k \tau$, $k \leq s$,
		we substitute $\xi := t^{\ast} - \tau - s$ and decompose the integral into two parts
		\begin{equation}
			\begin{split}
				B_{n}(t^{\ast}) &= \int\limits_{t^\ast - \tau}^{(k - 1)\tau} e^{L_n (\xi + \tau )} \exp_\tau\{D_n, \xi\} \left[ {\Phi '_n (t - \tau - \xi) - L_n \Phi _n (t - \tau  - \xi )} \right] \mathrm{d}\xi + \\
				&\phantom{=}\;\; \int\limits_{(k - 1)\tau }^{t^\ast} {e^{L_n (\xi + \tau )} \exp_\tau \{D_n, \xi \} \left[\Phi '_n (t - \tau - \xi) - L_n \Phi _n (t - \tau - \xi ) \right]} \mathrm{d}\xi.
			\end{split}
			\notag
		\end{equation}
		Exploiting the explicit form of the delayed exponential function on each of the time subintervals, we arrive at
		\begin{equation}
			\begin{split}
				B_n(t^\ast) &= \int\limits_{t^\ast  - \tau }^{(k - 1)\tau} {e^{L_n (\xi  + \tau )} [\Phi'_n (t^\ast - \tau - \xi) - L_n \Phi _n (t^\ast - \tau - \xi )]} \times \\
				&\phantom{=}\;\; \times \left\{ {1 + D_n \frac{\xi }{1!} + D_n^2 \frac{{(\xi  - \tau )^2 }}{{2!}} +  \cdots + D_n ^{k - 1} \frac{{[\xi  - (k - 2)\tau ]^{k - 1} }}{(k - 1)!}} \right\} \mathrm{d}\xi  + \\
				&\phantom{=}\;\; \int\limits_{(k - 1)\tau }^{t^\ast} {e^{L_n (\xi + \tau )} \left[ {\Phi '_n (t^\ast - \tau - \xi) - L_n \Phi _n (t^\ast - \tau  - \xi )} \right]}  \times \\
				&\phantom{=}\;\; \times \left\{1 + D_n \frac{\xi }{1!} + D_n^2 \frac{{(\xi  - \tau )^2 }}{{2!}} + \dots + D_n^k \frac{{[\xi  - (k - 1)\tau ]^k }}{k!} \right\} \mathrm{d}\xi.
			\end{split}
			\notag
		\end{equation}
		Plugging $L_{n}$ and $D_{n}$ from (\ref{EQ14}), we get
		\begin{equation}
			\begin{split} 
				B_n (t^*) &= \int\limits_{t^\ast - \tau }^{(k - 1)\tau } {e^{\left[c_{1} - \left(\frac{\pi n}{l} a_{1}\right)^{2}\right] (\xi  + \tau )} \left[\Phi'_n (t^\ast  - \tau - \xi ) - \right.} \\
				&\phantom{=}\;\; -\left[c_{1} - \left(\frac{\pi n}{l} a_{1}\right)^{2}\right]\left. {\Phi _n (t^\ast - \tau  - \xi )} \right] \times \\
				&\phantom{=}\;\; \times \left\{1 + \left[c_{2} - \left(\frac{\pi n}{l} a_{2}\right)^{2}\right] e^{-\left[c_{1} - \left(\frac{\pi n}{l} a_{1}\right)^{2}\right] \tau} \frac{\xi}{1!} + \right. \\
				&\phantom{=}\;\;+ \left[c_{2} - \left(\frac{\pi n}{l} a_{2}\right)^{2}\right]^2 e^{- 2\left[c_{1} - \left(\frac{\pi n}{l} a_{1}\right)^{2}\right] \tau} \frac{{(\xi  - \tau )^2}}{2!} + \ldots \\
				&\phantom{=}\;\;+ \left[c_{2} - \left(\frac{\pi n}{l} a_{2}\right)^{2}\right]^{k - 1} e^{-(k - 1)\left[c_{1} - \left(\frac{\pi n}{l} a_{1}\right)^{2}\right] \tau}  \times \\
				&\phantom{=}\;\;\times \left. {\frac{{[\xi  - (k - 2)\tau ]^{k - 1} }}{{(k - 1)!}}} \right\}\mathrm{d}\xi  + \int\limits_{(k - 1)\tau }^{t^\ast} {e^{\left[c_{1} - \left(\frac{\pi n}{l} a_{1}\right)^{2}\right] (\xi  + \tau )} }  \times \\
				&\phantom{=}\;\;\times \left\{ {\Phi'_n (t^\ast - \tau  - \xi ) - \left[c_{1} - \left(\frac{\pi n}{l} a_{1}\right)^{2}\right]} \right. \times \\
				&\phantom{=}\;\;\times e^{-\left[c_{1} - \left(\frac{\pi n}{l} a_{1}\right)^{2}\right] \tau } \frac{\xi }{{1!}} + \left[c_{2} - \left(\frac{\pi n}{l} a_{2}\right)^{2}\right]^2 e^{ {-} 2\left[c_{1} - \left(\frac{\pi n}{l} a_{1}\right)^{2}\right]}  \times \\
				&\phantom{=}\;\;\times \frac{{(\xi  - \tau )^2 }}{{2!}} +  \ldots  + \left[c_{2} - \left(\frac{\pi n}{l} a_{2}\right)^{2}\right]^k  \times \\
				&\phantom{=}\;\;\left. { \times e^{ - k\left[c_{1} - \left(\frac{\pi n}{l} a_{1}\right)^{2}\right]\tau } \frac{{[\xi  - (k - 1)\tau ]^k }}{{k!}}} \right\} \mathrm{d}\xi.
			\end{split}
			\notag
		\end{equation}
		Straightforward computations lead then to
		\begin{equation}
			\begin{split}
				B_n (t^\ast) &= \int\limits_{t^\ast - \tau }^{(k - 1)\tau } \left[\Phi'_n (t^\ast - \tau  - \xi ) {-} \left[c_{1} - \left(\frac{\pi n}{l} a_{1}\right)^{2}\right] \Phi _n (t^\ast - \tau - \xi ) \right] \times  \\
				&\phantom{=}\;\; \times \left\{ {e^{\left[c_{1} - \left(\frac{\pi n}{l} a_{1}\right)^{2}\right] (\xi  + \tau )}  + } \right.\left[c_{2} - \left(\frac{\pi n}{l} a_{2}\right)^{2}\right] \times \\
				&\phantom{=}\;\; \times e^{\left[c_{1} - \left(\frac{\pi n}{l} a_{1}\right)^{2}\right] \xi } \frac{\xi }{1!} + ... + \left[c_{2} - \left(\frac{\pi n}{l} a_{2}\right)^{2}\right]^{k - 1}  \times \\
				&\phantom{=}\;\; \times \left. {e^{\left[c_{2} - \left(\frac{\pi n}{l} a_{2}\right)^{2}\right] \left[\xi  - \left( {k - 2} \right)\tau\right] \hfill} \frac{{[\xi - (k - 2)\tau]^{k - 1} }}{{(k - 1)!}}} \right\} \mathrm{d} \xi  +
			\end{split}
			\notag
		\end{equation}
		\begin{equation}
			\begin{split}
				&\phantom{=}\;\; {+} \int\limits_{\left( {k - 1} \right)\tau }^{t^{\ast}} {\left[ {\Phi'_n (t^\ast  - \tau  - \xi) - \left[c_{1} - \left(\frac{\pi n}{l} a_{1}\right)^{2}\right] \Phi_n (t^\ast - \tau - \xi )} \right]}  \times \\
				&\phantom{=}\;\; \times \left\{ {e^{\left[c_{1} - \left(\frac{\pi n}{l} a_{1}\right)^{2}\right] (\xi  + \tau )}  + \left[c_{2} - \left(\frac{\pi n}{l} a_{2}\right)^{2}\right]} \right. \times \\
				&\phantom{=}\;\; \times e^{\left[c_{1} - \left(\frac{\pi n}{l} a_{1}\right)^{2}\right] \xi } \frac{\xi }{{1!}} + ... + \left[c_{2} - \left(\frac{\pi n}{l} a_{2}\right)^{2}\right]^k  \times \\
				&\phantom{=}\;\; \times \left. {e^{\left[c_{1} - \left(\frac{\pi n}{l} a_{1}\right)^{2}\right] [\xi - (k - 1)\tau ]} \frac{{[\xi - (k - 1)\tau ]^k }}{{k!}}} \right\} \mathrm{d}\xi.
			\end{split}
			\notag
		\end{equation}
		Thus, the $S_{2}(x, t^{\ast})$ can be written in the form
		\begin{equation}
			\begin{split}
				S_2&(x, t^\ast) = \sum\limits_{n = 1}^\infty {\left\{ {\int\limits_{t^\ast  - \tau }^{(k - 1)\tau } {e^{\left[c_{1} - \left(\frac{\pi n}{l} a_{1}\right)^{2}\right] \left( {\xi  + \tau } \right)} } } \right.}  \cdot \left[ {\Phi'_n (t^\ast   - \tau  - \xi) -} \right. \\
				&\phantom{=}\;\; \left. -\left. {\left[c_{1} - \left(\frac{\pi n}{l} a_{1}\right)^{2}\right] \Phi _n (t^\ast   - \tau  - \xi )} \right]\mathrm{d}\xi \right\} \sin \frac{{\pi n}}{l}x + \\
				&\phantom{=}\;\; + \sum\limits_{n = 1}^\infty  {\left\{ {\int\limits_{t^\ast  - \tau }^{(k - 1)\tau } {e^{\left[c_{1} - \left(\frac{\pi n}{l} a_{1}\right)^{2}\right] \xi } } } \right.} \frac{\xi }{1!}\left[c_{2} - \left(\frac{\pi n}{l} a_{2}\right)^{2}\right] \times \\
				&\phantom{=}\;\; \times \left. {\left[{\Phi'_n (t^\ast  - \tau  - \xi ) - \left[c_{1} - \left(\frac{\pi n}{l} a_{1}\right)^{2}\right] \Phi _n (t^\ast  - \tau - \xi )} \right]\mathrm{d}\xi } \right\} \times \\
				&\phantom{=}\;\; \times \sin \frac{{\pi n}}{l}x + \dots + \sum\limits_{n {=} 1}^\infty  {\left\{ {\int\limits_{t^\ast - \tau }^{(k - 1)\tau } {e^{\left[c_{1} - \left(\frac{\pi n}{l} a_{1}\right)^{2}\right]} } } \right.} \frac{{\left[ {\xi - (k - 2)\tau } \right]^{k {-} 1} }}{{(k - 1)!}} \times \\
				&\phantom{=}\;\; \times \left[c_{2} - \left(\frac{\pi n}{l} a_{2}\right)^{2}\right]^{k - 1} \times \left[ {\Phi'_n (t^\ast - \tau  {-} \xi ) - \left[c_{1} - \left(\frac{\pi n}{l} a_{1}\right)^{2}\right] \times } \right. \\
				&\phantom{=}\;\; \times \left. {\left. {\Phi_n (t^\ast   {-} \tau  - \xi )} \right]\mathrm{d}\xi } \right\} \sin \frac{{\pi n}}{l}x + \sum\limits_{n = 1}^\infty  {\left\{ {\int\limits_{(k - 1)\tau }^{t^\ast} {e^{\left[c_{1} - \left(\frac{\pi n}{l} a_{1}\right)^{2}\right] (\xi + \tau )} } } \right.}  \times \\
				&\phantom{=}\;\; \times \left. {\left[ {\Phi'_n (t^\ast   {-} \tau  - \xi ) - \left[c_{1} - \left(\frac{\pi n}{l} a_{1}\right)^{2}\right] \Phi _n (t^\ast - \tau  - \xi )} \right]\mathrm{d}\xi } \right\} \times \\
				&\phantom{=}\;\; \times \sin \frac{{\pi n}}{l}x + \sum\limits_{n = 1}^\infty  {\left\{ {\int\limits_{(k {-} 1)\varsigma }^{t^\ast} {e^{\left[c_{1} - \left(\frac{\pi n}{l} a_{1}\right)^{2}\right] \xi } } \frac{\xi }{{1!}} \left[c_{2} - \left(\frac{\pi n}{l} a_{2}\right)^{2}\right] \times } \right.}
			\end{split}
			\notag
		\end{equation}
		\begin{equation}
			\begin{split}
				&\phantom{=}\;\; \times \left. {\left[ {\Phi'_n (t^\ast   - \tau  - \xi ) - \left[c_{1} - \left(\frac{\pi n}{l} a_{1}\right)^{2}\right] \Phi _n (t^\ast - \tau  - \xi )} \right]\mathrm{d}\xi } \right\} \times \\
				&\phantom{=}\;\; \times \sin \frac{{\pi n}}{l}x + \dots + \sum\limits_{n = 1}^\infty  {\left\{ {\int\limits_{(k - 1)\tau }^{t^\ast} {e^{\left[c_{1} - \left(\frac{\pi n}{l} a_{1}\right)^{2}\right] \left[ {\xi  - (k - 2)\tau } \right]} } } \right.}  \times \\
				&\phantom{=}\;\; \times \frac{{\left[ {\xi  - (k - 2)\tau } \right]^{k - 1} }}{{(k - 1)!}}\left[c_{2} - \left(\frac{\pi n}{l} a_{2}\right)^{2}\right]^{k - 1}  \times \\
				&\phantom{=}\;\; \times \left. {\left[ {\Phi'_n (t^\ast   - \tau  - \xi ) - \left[c_{1} - \left(\frac{\pi n}{l} a_{1}\right)^{2}\right]\Phi _n (t^\ast - \tau  - \xi )} \right]\mathrm{d}\xi } \right\} \times \\
				&\phantom{=}\;\; \times \sin \frac{{\pi n}}{l}x + \sum\limits_{n {=} 1}^\infty  {\left\{ {\int\limits_{(k {-} 1)\tau }^{t^\ast} {e^{\left[c_{1} - \left(\frac{\pi n}{l} a_{1}\right)^{2}\right] [\xi  {-} (k {-} 1)\tau ]} } } \right.} \frac{{[\xi  {-} (k {-} 1)\tau ]^k }}{{k!}} \times \\
				&\phantom{=}\;\; \times \left[c_{2} - \left(\frac{\pi n}{l} a_{2}\right)^{2}\right]^k \bigg[\Phi '_n (t^\ast  - \tau  - \xi ) - \\
				&\phantom{=}\;\; \left. {- \left. {\left[c_{1} - \left(\frac{\pi n}{l} a_{1}\right)^{2}\right] \Phi _n (t^ *   - \tau  - \xi )\mathrm{d}\xi } \right]} \right\} \sin \frac{{\pi n}}{l}x.
			\end{split}
			\notag
		\end{equation}
		On the strength of condition $(k - 1)\tau \leq t^{\ast} < k\tau$, we have for sufficiently large $n$
		\begin{equation}
			c_{1} - \left(\frac{\pi n}{l} a_{1}\right)^{2}  < 0, \notag
		\end{equation}
		which guarantees the arguments of the exponential function to be negative.
		Hence, all but the last series converge absolutely and uniformly.
		We consider thus the last series
		\begin{equation}
			\begin{split}
				&S_2^{k + 1}(x, t^{\ast}) = \sum\limits_{n = 1}^\infty \left\{\int\limits_{(k - 1)\tau}^{t^{\ast}} e^{\left[c_{1} - \left(\frac{\pi n}{l} a_{1}\right)^{2}\right] \left[\xi  - (k - 1)\tau\right]} \times \right. \\
				&\phantom{=}\;\; \times \frac{[\xi  - (k - 1)\tau ]^k}{k!}\left[c_{2} - \left(\frac{\pi n}{l} a_{2}\right)^{2}\right]^k  \times \\
				&\phantom{=}\;\; \times \left.\left[\Phi'_n (t^{\ast} - \tau - \xi) - \left[c_{1} - \left(\frac{\pi n}{l} a_{1}\right)^{2}\right] \Phi_n (t^{\ast}  - \tau - \xi) \right]\mathrm{d}\xi \right\} \sin \frac{{\pi n}}{l}x.
			\end{split}
			\notag
		\end{equation}
		Exploiting the assumption that the Fourier coefficients $\Phi_{n}(t^{\ast})$, $\Phi_{n}'(t^{\ast})$ are rapidly decreasing for $n \to \infty$, i.e.,
		\begin{equation}
			\lim\limits_{n \to \infty } n^{2m + 1 + \delta} 
			\max_{s \in [-\tau, 0]} \left[|\Phi''_{n}(s)| + n^{2} |\Phi'_{n}(s)| + n^{4} |\Phi_{n}(s)|\right] = 0 
			\notag
		\end{equation}
		for some $\delta > 0$ and taking into account the smoothness of $\exp_{\tau}\{b, \cdot\}$ function,
		we conclude like in the previous case that the whole series $S_{2}$ as well as $\partial_{t} S_{2}$, $\partial_{xx} S_{2}$ converge absolutely and uniformly.

		\item Finally, we consider the third series $S_{3}$.
		For an arbitrary $t^{\ast}$ with $(k - 1) \tau \leq t^{\ast} < k \tau$, $k \leq s$,
		we substitute $\xi := t^{\ast} - \tau - s$ and rewrite the integral as a Fourier series with the following integral coefficients
		\begin{equation}
			\begin{split}
				C_n (t^{\ast}) &= \int\limits_{-\tau}^{t^{\ast} - \tau} e^{\left[c_{1} - \left(\frac{\pi n}{l} a_{1}\right)^{2}\right] (\xi  + \tau )} \exp_\tau\{D_n, \xi\} F_n(t^{\ast} - \tau - \xi) \mathrm{d}\xi \\
				&= \int\limits_{-\tau}^0 e^{\left[c_{1} - \left(\frac{\pi n}{l} a_{1}\right)^{2}\right](\xi  + \tau)} F_n (t^{\ast} - \tau - \xi) \mathrm{d}\xi + \\
				&\phantom{=}\;\; \int\limits_0^\tau  {e^{\left[c_{1} - \left(\frac{\pi n}{l} a_{1}\right)^{2}\right]](\xi  + \tau )} \left[ {1 + D_n \frac{\xi }{{1!}}} \right]} F_n (t^\ast - \tau  - \xi ) \mathrm{d}\xi  + \dots \\
				&\phantom{=}\;\; \dots + \int\limits_{(k - 2)\tau }^{t^{\ast} - \tau} e^{\left[c_{1} - \left(\frac{\pi n}{l} a_{1}\right)^{2}\right] (\xi  + \tau)} \left[1 + D_n \frac{\xi}{1!} + D_n^2 \frac{(\xi - \tau)^2}{2!} + \dots \right. \\
				&\phantom{=}\;\; \dots \left. + D_n ^{k - 1} \frac{(\xi  - (k - 2)\tau)^{k - 1}}{(k - 1)!}\right] F_n (t^{\ast} - \tau - \xi ) \mathrm{d}\xi.
			\end{split}
			\notag
		\end{equation}
		Hence, $S_{3}$ reads as
		\begin{equation}
			\begin{split}
				S_3&(x,t^{\ast}) = \sum\limits_{n = 1}^\infty \left\{\int\limits_{-\tau}^0 {e^{\left[c_{1} - \left(\frac{\pi n}{l} a_{1}\right)^{2}\right] (\xi + \tau)} F_n (t^{\ast} - \tau - \xi )} \mathrm{d}\xi \right\} \sin \frac{\pi n}{l}x + \\
				&\phantom{=}\;\; \sum\limits_{n = 1}^\infty \left\{\int\limits_0^\tau \left[e^{\left[c_{1} - \left(\frac{\pi n}{l} a_{1}\right)^{2}\right] (\xi + \tau)}  + \left[c_{2} - \left(\frac{\pi n}{l} a_{2}\right)^{2}\right] \right. \times \right. \\
				&\phantom{=}\;\; \left. \times \left. e^{\left[c_{1} - \left(\frac{\pi n}{l} a_{1}\right)^{2}\right]\xi} \frac{\xi }{1!}\right] F_n (t^{\ast}  - \tau - \xi) \mathrm{d}\xi \right\} \sin \frac{\pi n}{l}x + \dots \\
				&\phantom{=}\;\; \dots + \sum\limits_{n = 1}^\infty {\left\{ {\int\limits_{(k - 2)\tau }^{t^{\ast} - \tau} {\left[ {e^{\left[c_{1} - \left(\frac{\pi n}{l} a_{1}\right)^{2}\right] (\xi  + \tau )}  + } \right.} } \right.} \left[c_{2} - \left(\frac{\pi n}{l} a_{2}\right)^{2}\right] \times \\
				&\phantom{=}\;\; \times e^{\left[c_{1} - \left(\frac{\pi n}{l} a_{1}\right)^{2}\right] \xi } \frac{\xi }{{1!}} + \dots + \left[c_{2} - \left(\frac{\pi n}{l} a_{2}\right)^{2}\right]^{k - 1}  \times
			\end{split}
			\notag
		\end{equation}
		\begin{equation}
			\left.{\times \frac{{[t^{\ast}   - (k - 2)\tau ]^{k - 1} }}{{(k - 1)!}}e^{\left[c_{1} - \left(\frac{\pi n}{l} a_{1}\right)^{2}\right] [\xi  - (k - 2)\tau ]} } \right] \times
			\times F_n (t^{\ast} - \tau  - \xi) \mathrm{d}\xi \Bigg\} \sin \frac{{\pi n}}{l}x.
			\notag
		\end{equation}
		Similarly to the previous cases,
		for sufficiently large $n$ we have
		\begin{equation}
			c_{1} - \left(\frac{\pi n}{l} a_{1}\right)^{2} < 0, \notag
		\end{equation}
		which yields the convergence of all but the first two subseries in each of the series due to the negativity of the argument of the exponential function.
		We consider a typical term where the convergence is not maintained by the exponential function
		\begin{equation}
			\begin{split}
				S_3^k(x, t^{\ast}) &= \sum\limits_{n = 1}^\infty \bigg\{\int\limits_{(k - 2) \tau}^{t^{\ast} - \tau}
				\left[c_{2} - \left(\frac{\pi n}{l}\right)^{2}\right]^{k-1}
				\frac{\left[t^{\ast} - (k - 2)\tau\right]}{(k - 1)!} \times \\
				&\phantom{=} \times e^{\left[c_1  - \left(\frac{\alpha^2}{4} + \frac{\pi ^2 n^2}{l^2}\right) a_1^2\right](\xi - (k - 2))}  F_n(t^{\ast} - \tau - \xi) \mathrm{d}\xi\bigg\} \sin \frac{\pi n}{l}x.
			\end{split}
			\notag
		\end{equation}	
		Exploiting the assumption
		\begin{equation}
			\lim\limits_{n \to \infty } n^{2m - 1 + \delta}
			\max_{s \in [0, T]} \left[|F'_{n}(s)| + n^{2} |F_{n}(s)|\right] = 0 \notag
		\end{equation}
		as well as the fact $\exp_{\tau}\{b, \cdot\}$ is Lipschitz-continuous on $[-\tau, \infty)$,
		we similarly conclude the absolute and uniform converge of $S_{3}$, $\partial_{t} S_{3}$, and $\partial_{xx} S_{3}$.
	\end{enumerate}
	This ends the proof.
\end{proof}

Finally, we give a Sobolev space interpretation of the conditions of Theorem \ref{MAIN_THEOREM}.
As a direct consequence of Lemma \ref{LEMMA_FOURIER_DECAY},
we know that the conditions of Theorem \ref{MAIN_THEOREM} are fulfilled if the functions $F$ and $\Phi$ from Equations (\ref{EQ2_5}) and (\ref{EQ2_7}) satisfy
\begin{equation}
	\begin{split}
		\Phi &\in \mathcal{C}^{2}\big([-\tau, 0], X_{m + 1}\big) \cap \mathcal{C}^{1}\big([-\tau, 0], X_{m + 2}\big) \cap \mathcal{C}^{0}\big([-\tau, 0], X_{m + 3}\big), \\
		F &\in \mathcal{C}^{1}\big([0, T], X_{m}\big) \cap \mathcal{C}^{0}([0, T], X_{m + 1}\big).
	\end{split}
	\notag
\end{equation}
Taking into account Definition \ref{DEFINITION_OPERATOR} and Equations (\ref{EQ2_5}) and (\ref{EQ2_7})
and using the explicit characterization $X_{m}$, the latter holds under the assumptions of the following Corollary.
\begin{corollary}
	The decay conditions on the Fourier coefficients in the previous theorem are satisfied if the following regularity
	\begin{equation}
		\begin{split}
			\varphi &\in \bigcap_{k = 0}^{2} \mathcal{C}^{k}\big([-\tau, 0], H^{2m + 2(3 - k)}\big((0, l)\big)\big), \\
			\mu_{1}, \mu_{2} &\in \mathcal{C}^{3}([0, T]), \\
			f &\in \mathcal{C}^{1}\big([0, T], H^{2m}\big((0, l)\big)\big) \cap \mathcal{C}^{0}\big([0, T], H^{2m + 2}\big((0, l)\big)\big)
		\end{split}
		\notag
	\end{equation}
	and compatibility conditions
	\begin{equation}
		\begin{split}
			&\varphi(0, t) = \mu_{1}(t), \quad \varphi(l, t) = \mu_{2}(t), \\
			&\partial_{x}^{2j} \partial_{t}^{k} \varphi(0, t) = 0, \quad
			\partial_{x}^{2j} \partial_{t}^{k} \varphi(l, t) = 0, \quad k = 0, 1, 2, \; j = 1, \dots, m + 2 - k
		\end{split}
		\notag
	\end{equation}
	for $t \in [-\tau, 0]$ and
	\begin{equation}
		\begin{split}
			f(0, t) - \dot{\mu}_{1}(t) + c_{1} \mu_{1}(t) + c_{2} \mu_{1}(t - \tau) &= 0, \\
			f(l, t) - \dot{\mu}_{2}(t) + c_{1} \mu_{2}(t) + c_{2} \mu_{2}(t - \tau) &= 0, \\
			f_{t}(0, t) - \ddot{\mu}_{1}(t) + c_{1} \dot{\mu}_{1}(t) + c_{2} \dot{\mu}_{1}(t - \tau) &= 0, \\
			f_{t}(l, t) - \ddot{\mu}_{2}(t) + c_{1} \dot{\mu}_{2}(t) + c_{2} \dot{\mu}_{2}(t - \tau) &= 0, \\
			\partial_{x}^{2j} f(0, t) = 0, \quad \partial_{x}^{2j} f(l, t) &= 0, \quad j = 0, \dots, m \\
			\partial_{x}^{2j} \partial_{t} f(0, t) = 0, \quad \partial_{x}^{2j} \partial_{t} f(l, t) &= 0, \quad j = 0, \dots, m - 1
		\end{split}
		\notag
	\end{equation}
	for $T \in [0, T]$ are satisfied.
\end{corollary}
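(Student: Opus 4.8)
The plan is to reduce the Corollary to the intermediate claim displayed just before it, namely that the data hypotheses force
\begin{equation}
	\Phi \in \mathcal{C}^{2}\big([-\tau, 0], X_{m+1}\big) \cap \mathcal{C}^{1}\big([-\tau, 0], X_{m+2}\big) \cap \mathcal{C}^{0}\big([-\tau, 0], X_{m+3}\big) \notag
\end{equation}
together with $F \in \mathcal{C}^{1}\big([0, T], X_{m}\big) \cap \mathcal{C}^{0}\big([0, T], X_{m+1}\big)$; by Lemma \ref{LEMMA_FOURIER_DECAY} applied coefficient-wise this already yields the three decay conditions (\ref{EQ2_24}). First I would recall the explicit characterization of $X_{p} = D(\A^{p})$ from Definition \ref{DEFINITION_OPERATOR} and the norm equivalence $\|\cdot\|_{X_{p}} \simeq \|\cdot\|_{H^{2p}((0,l))}$: membership $w \in X_{p}$ is equivalent to $w \in H^{2p}\big((0, l)\big)$ together with the vanishing of the boundary traces of $\partial_{x}^{2k} w$ at $x = 0, l$ for $k = 0, \dots, p-1$.

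Next I would exploit the affine structure of the correction terms. By (\ref{EQ2_5}) and (\ref{EQ2_7}) both $\Phi$ and $F$ differ from $\varphi$ and $f$, respectively, only by terms of the form $\alpha(t) + \frac{x}{l}\big(\beta(t) - \alpha(t)\big)$, where $\alpha, \beta$ are built from $\mu_{1}, \mu_{2}$ and their first derivatives and $\tau$-shifts. Since $\mu_{1}, \mu_{2} \in \mathcal{C}^{3}$, these corrections are polynomial (hence $\mathcal{C}^{\infty}$) in $x$ and sufficiently smooth in $t$; in particular they are annihilated by $\partial_{x}^{2}$, so that $\partial_{x}^{2j} \Phi = \partial_{x}^{2j} \varphi$ and $\partial_{x}^{2j} F = \partial_{x}^{2j} f$ for every $j \geq 1$. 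This immediately splits the desired membership into two independent checks: a Sobolev-order check and a boundary-trace check.

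For the Sobolev orders, the correction terms contribute nothing beyond the finite regularity carried by $\mu_{1}, \mu_{2}$, so the spatial orders and the temporal $\mathcal{C}^{k}$ regularity of $\Phi$ and $F$ are inherited verbatim from $\varphi$ and $f$. Matching $X_{m+3}, X_{m+2}, X_{m+1}$ against $H^{2m+6}, H^{2m+4}, H^{2m+2}$ at the $\mathcal{C}^{0}, \mathcal{C}^{1}, \mathcal{C}^{2}$ levels reproduces exactly the three-fold intersection $\varphi \in \bigcap_{k=0}^{2} \mathcal{C}^{k}\big([-\tau, 0], H^{2m + 2(3-k)}\big((0,l)\big)\big)$, and likewise $f \in \mathcal{C}^{1}\big([0,T], H^{2m}\big) \cap \mathcal{C}^{0}\big([0,T], H^{2m+2}\big)$ for $F$. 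For the boundary traces I would distinguish $j = 0$ from $j \geq 1$: the higher traces reduce, via $\partial_{x}^{2j}\Phi = \partial_{x}^{2j}\varphi$, to the pure vanishing conditions $\partial_{x}^{2j}\partial_{t}^{k}\varphi(0,t) = \partial_{x}^{2j}\partial_{t}^{k}\varphi(l,t) = 0$ and $\partial_{x}^{2j}\partial_{t}^{k}f = 0$ listed in the Corollary, while the $j = 0$ traces produce the genuine matching conditions $\Phi(0,t) = \varphi(0,t) - \mu_{1}(t) = 0$ and $F(0,t) = f(0,t) - \dot{\mu}_{1}(t) + c_{1}\mu_{1}(t) + c_{2}\mu_{1}(t - \tau) = 0$, together with their $x = l$ analogues.

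The main obstacle, and the part requiring genuine care rather than routine algebra, is the index bookkeeping that aligns the time-differentiation order with the admissible spatial-trace order. The $\mathcal{C}^{2}$-branch requires traces only up to $\partial_{x}^{2m}$ (i.e. $X_{m+1}$), the $\mathcal{C}^{1}$-branch up to $\partial_{x}^{2(m+1)}$, and the $\mathcal{C}^{0}$-branch up to $\partial_{x}^{2(m+2)}$, which is precisely the constraint $j \leq m + 2 - k$ for $k = 0, 1, 2$ in the statement; for $F$ the two branches $X_{m+1}$ and $X_{m}$ give $j \leq m$ and $j \leq m-1$ for the traces of $f$ and $\partial_{t} f$. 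I would also note that for the $j = 0$ traces of $\varphi$ the higher-time-derivative conditions are not independent: differentiating $\varphi(0,t) = \mu_{1}(t)$ in $t$ reproduces the $k = 1, 2$ cases automatically, which is why only the Dirichlet matching is listed for $j = 0$. By contrast, since $F$ contains $\dot{\mu}_{i}$ and the delayed values $\mu_{i}(\cdot - \tau)$, differentiating the $j = 0$ condition for $F$ yields the genuinely new second pair of scalar identities involving $\ddot{\mu}_{i}, \dot{\mu}_{i}$ and $\dot{\mu}_{i}(\cdot - \tau)$. Once this correspondence is verified term by term, the $X_{k}$-valued regularity of $\Phi$ and $F$ follows, and the conclusion is immediate from the reduction of the first paragraph.
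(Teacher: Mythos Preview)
Your proposal is correct and follows exactly the approach the paper takes: the paper's own argument is the two-sentence reduction stated just before the Corollary (Lemma~\ref{LEMMA_FOURIER_DECAY} forces the decay from the $X_{k}$-valued regularity of $\Phi$ and $F$, and then Definition~\ref{DEFINITION_OPERATOR} plus the explicit formulas (\ref{EQ2_5}), (\ref{EQ2_7}) reduce that regularity to the listed hypotheses on $\varphi, f, \mu_{1}, \mu_{2}$). You have simply spelled out the bookkeeping that the paper leaves implicit --- in particular the observation that the affine correction is annihilated by $\partial_{x}^{2}$, and the matching of the index range $j \leq m+2-k$ to the spaces $X_{m+3-k}$ --- so the content is identical.
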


\bigskip

\AuthorInfo{Denis Khusainov, \\Taras Shevchenko National University of Kyiv, Kyiv, 03680, Ukraine;}

\AuthorInfo{Michael Pokojovy, \\ University of Konstanz, Konstanz, 78457, Germany;}

\AuthorInfo{Elvin Azizbayov, \\ Baku State University, Baku, AZ 1148, Azerbaijan.}
\end{document}